\newcommand{\bqed}{\hfill{$\blacksquare$}}
\newcommand{\es}{\varnothing}
\DeclareMathOperator{\AT}{\mathsf{AT}}
\DeclareMathOperator{\I}{\mathrm{\,I\,}}
\DeclareMathOperator{\numbad}{\mathrm{numBad}}
\begin{document}

\title{{\sc Gray Codes for $\AT$-Free Orders}\thanks{A preliminary version ``Gray codes for $\AT$-free orders via antimatroids'' was presented in the 26th International Workshop on Combinatorial Algorithms (IWOCA 2015), Verona, Italy.}}

\author{
Jou-Ming~Chang\inst{1} 
\and 
Ton~Kloks\inst{}
\and 
Hung-Lung~Wang\inst{1}
}

\institute{
Institute of Information and Decision Sciences\\
National Taipei University of Business, Taipei, Taiwan\\
{\tt (spade,hlwang)@ntub.edu.tw}
}

\maketitle 

\begin{abstract}
$\AT$-free graphs are characterized by vertex elimination orders. 
We show that these $\AT$-free orders of a graph  
can be generated in constant amortized time. 
\end{abstract}

\section{Introduction}

$\AT$-free graphs are those that do not have an asteroidal triple 
\,\textemdash\, that is \,\textemdash\, 
$\AT$-free graphs do \,not\, have 
three vertices \:of \,which \,every pair is connected by a path \,that 
\,avoids \,the \,neighborhood \,of \,the \,third. 

\bigskip 

Broersma \,et al.~\cite{kn:broersma} 
\,introduced the following `betweenness relation' 
to characterize $\AT$-free graphs. Let $G$ be an $\AT$-free graph. 
For a vertex $x$ \:let $N(x)$ denote its neighborhood and let 
$N[x]$ denote its closed neighborhood; \,$N(x) \cup x$. 
A vertex $z$ is \,\underline{between}\, $x$ and $y$ if there is a path from 
\,$z$\, to \,$x$\, \,that avoids \,$N[y]$ \,and \,similarly \:
there is a path from \,$z$\, to \,$y$\, \,that avoids \,$N[x]$.  
\par 
Let \,$\I(x,y)$\, stand for the set of vertices that are between $x$ and $y$. 
\,Then \,a graph is $\AT$-free \,if and only if 
\,for any three vertices $x$, $y$ and $z$ the following property holds.  
\[z \,\in\, \I(x,y) \quad \Rightarrow \quad x \,\notin\, \I(z,y)\] 

\bigskip 

A \,set system \,is a pair $(E,\mathcal{C})$ \;where $E$ is a finite set and 
$\mathcal{C}$ is a collection of subsets of $E$. 
The elements of $\mathcal{C}$ will be called \,\underline{convex}. 
The problem to determine 
\;for which set systems a greedy algorithm optimizes 
linear objective functions   
\;has a long history \,\textemdash\, for a brief overview see \,eg\, Helman 
\,et al.~\cite{kn:helman} \;For set systems that are convex geometries 
\:Kashiwabara \,and \,Okamoto~\cite{kn:kashiwabara} \:characterize 
\,linear programming problems  
\,for which a greedy algorithm
finds an optimum. 
 
\bigskip 

A \,\underline{convex geometry}\, is a set system $(E,\mathcal{C})$ that 
satisfies the following properties. 
\begin{enumerate}[\rm (1.)]
\item $E \in \mathcal{C}$ and $\es \in \mathcal{C}$. 
\item $\mathcal{C}$ is closed under intersections. 
\item
The \underline{anti-exchange property} holds, that is, 
for all convex sets $Y \in \mathcal{C}$ \;and \;$x,z \notin Y$, \;$x \neq z$  
\begin{gather*}
z \,\in\, \sigma(x+Y) \quad \Rightarrow\quad x \,\notin\, \sigma(z+Y) \\
\text{where we write}
\quad \sigma(U)=\;\bigcap \;\;\{\:J\:|\: J \in \mathcal{C} 
\quad \text{and}\quad 
U \subseteq J\;\} \quad\text{for a subset $U$ of $E$.}
\end{gather*}
\end{enumerate}

\newpage
\noindent For \:some \:interesting \:`prospective applications' 
\:of convex geometries \:in \:cloud computing 
\;we \:refer \:to \:Kordecki~\cite{kn:kordecki}. 

\bigskip 

Let $G$ be an $\AT$-free graph. Define a set system \:on $V=V(G)$ 
\:as the collection of convex sets in $G$ \,\textemdash\, 
where a set $X \subseteq V$ \:is \:convex \,if 
it contains \:with any two of its elements \:the elements  
that are between them. 
\par 
In the following section we show that 
\,the collection of convex sets 
\,in an $\AT$-free graph \,constitutes  \,a \,convex \,geometry.  
This completes the 
result of Alc\'on \,et al.~\cite{kn:alcon}\, who proved a similar result 
for interval graphs. 

\section{$\AT$-free convex geometries}

A set system $(E,\mathcal{C})$ \:which satisfies $E,\es \in \mathcal{C}$ 
\:and \:which is closed under intersections \:is called  
an alignment \:by \:Edelman and Jamison~\cite{kn:edelman}.  They show that an alignment 
satisfies the anti-exchange property \:if and only if \:either one 
\:of the following two properties holds. 
\begin{enumerate}[\rm (1.)] 
\item For any $C \in \mathcal{C}$ and $y \notin C$ \:the element  
$y$ is \,\underline{extreme}\, in $\sigma(C + y)$ 
\,\textemdash\, that is \,\textemdash\, 
$\sigma(C + y) \setminus y$ is convex. 
\item Any convex set $X$,  $X\neq E$, has an element $y \notin X$ 
such that $X + y$ is convex. 
\end{enumerate}

\bigskip 

The following definition allows us to characterize convex geometries 
with a third property 
(which is equivalent to the 
characterization~\cite[Theorem~2.3]{kn:edelman}). 

\begin{definition}
Let \,$(E,\mathcal{C})$\, be an alignment, \;let \,$X,Y \subseteq E$ \,and \,let 
\:$Y=\{y_1,\dots,y_k\}$. \,Let \,$k \geq 2$. 
\;The set \,$Y$ \,\underline{induces a cycle}\, on \,$X$\, if 
\:for \:all \:$i \in [k]$    
\[y_{i+1} \in \sigma(y_i+X) \quad \text{\rm{if \,$i < k$ \;and}}\quad 
y_1 \in \sigma(y_k+X).\]
\end{definition}

\bigskip 

\begin{lemma} 
\label{property (2.3)} 
An alignment \,$(E,\mathcal{C})$\, is a convex geometry \,if and only if 
\,any set \,$Y$\, \:which induces a cycle on a set \,$X$ 
\:is contained in \,$\sigma(X)$. 
\end{lemma}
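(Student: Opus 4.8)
The plan is to prove the two implications separately, in both cases working directly from the anti-exchange axiom and from the fact that in an alignment $\sigma$ is a genuine closure operator — monotone, idempotent, and with $\sigma(U)\in\mathcal{C}$ for every $U\subseteq E$ (the last point uses $E\in\mathcal{C}$ together with closure under intersection).

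For the ``if'' direction I would argue by contradiction. Suppose anti-exchange fails, so there are $C\in\mathcal{C}$ and $x\ne z$ with $x,z\notin C$, $z\in\sigma(x+C)$, and $x\in\sigma(z+C)$. Then the two-element set $Y=\{y_1,y_2\}$ with $y_1=x$ and $y_2=z$ induces a cycle on $C$, since for $k=2$ the defining conditions are exactly $y_2\in\sigma(y_1+C)$ and $y_1\in\sigma(y_2+C)$. By hypothesis $Y\subseteq\sigma(C)=C$ (using convexity of $C$), contradicting $x\notin C$. Hence anti-exchange holds, and an alignment satisfying anti-exchange is a convex geometry.

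For the ``only if'' direction, suppose $(E,\mathcal{C})$ is a convex geometry and $Y=\{y_1,\dots,y_k\}$ induces a cycle on $X$; I want $Y\subseteq\sigma(X)$. First reduce to a convex base: since $\sigma(y+X)=\sigma(y+\sigma(X))$, the set $Y$ also induces a cycle on $Z:=\sigma(X)$, and $\sigma(Z)=\sigma(X)$, so it is enough to prove $Y\subseteq Z$. Next observe that membership in $Z$ propagates along the cycle: if $y_i\in Z$ then $\sigma(y_i+Z)=Z$, so the cyclic successor of $y_i$ is in $Z$ as well; going around the cycle, either all $y_i$ lie in $Z$ (and we are done) or none does. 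So assume all $y_i\in E\setminus Z$ and look for a contradiction. Define $a\to b$ on $E\setminus Z$ by $b\in\sigma(a+Z)$. This relation is reflexive; it is transitive because $b\in\sigma(a+Z)$ and $Z\subseteq\sigma(a+Z)$ give $b+Z\subseteq\sigma(a+Z)$, hence $\sigma(b+Z)\subseteq\sigma(a+Z)$ since $\sigma(a+Z)\in\mathcal{C}$; and it is antisymmetric, which is precisely the anti-exchange axiom applied with the convex set $Z$. But the cycle gives a directed cycle $y_1\to y_2\to\cdots\to y_k\to y_1$ through the $k\ge 2$ distinct elements $y_1,\dots,y_k$, and transitivity then forces both $y_1\to y_2$ and $y_2\to y_1$ with $y_1\ne y_2$, contradicting antisymmetry. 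Hence $Y\subseteq Z=\sigma(X)$.

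The routine parts are unwinding the cycle definition for $k=2$ and the closure-operator identities. The step that deserves care is the ``only if'' direction, where one must correctly reduce to a convex base, then check that every cycle vertex lies outside $Z$ before invoking the order argument, and finally verify transitivity of $\to$ — this last point is the real content, as it is what converts a length-$k$ cycle into a length-$2$ violation of anti-exchange.
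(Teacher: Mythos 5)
Your proof is correct, but it takes a different route from the one in the paper. The paper leans on Edelman and Jamison's characterizations of convex geometries twice: for ``convex geometry $\Rightarrow$ cycle condition'' it uses the extreme-point form (if some $y\in Y\setminus\sigma(X)$, then $\sigma(X+y)\setminus y$ fails to be convex), and for the converse it uses the augmentation form, taking an inclusion-minimal $Y\subseteq E\setminus C$ with $C\cup Y$ convex and extracting a cycle from minimality to force $|Y|=1$. You instead work directly from the anti-exchange axiom in both directions: a failure of anti-exchange is literally a $2$-element cycle with $Y\not\subseteq\sigma(C)$, which gives one implication immediately; and for the other you reduce to the convex base $Z=\sigma(X)$ via $\sigma(y+X)=\sigma(y+\sigma(X))$, note that membership in $Z$ propagates around the cycle, and then show the relation $a\to b$ iff $b\in\sigma(a+Z)$ on $E\setminus Z$ is transitive, so a $k$-cycle collapses to a $2$-cycle contradicting anti-exchange (antisymmetry). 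The transitivity step and the ``all or none in $Z$'' observation are the points that need care, and you handle both correctly. What your approach buys is self-containedness — it never invokes the Edelman--Jamison theorem, only the closure-operator properties of $\sigma$ in an alignment — whereas the paper's argument is shorter once that theorem is taken as given; your collapsing argument also makes explicit that anti-exchange is exactly acyclicity (antisymmetry) of the natural preorder $b\in\sigma(a+Z)$ over a convex base, which is a cleaner conceptual reading of the lemma.
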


\bigskip 

All proofs we skip here can be found in the appendix. We proceed to prove \,that the convex sets in an $\AT$-free graph constitute 
a convex geometry 
\,via the following three lemmas. 
(Some easily-made drawings might be helpful to the reader.)

\begin{lemma}
\label{prop 3.3}
Let $G$ be an $\AT$-free graph. Any four vertices satisfy the following 
property. 
\begin{equation}
u \in \I(v,x) \quad \text{\rm{and}}\quad 
v \in \I(u,y) \quad \Rightarrow \quad u \in \I(x,y)
\end{equation}
\end{lemma}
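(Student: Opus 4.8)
The plan is to establish the two conditions defining $u \in \I(x,y)$ separately: the existence of a path from $u$ to $x$ avoiding $N[y]$, and of a path from $u$ to $y$ avoiding $N[x]$. Each is obtained by contradiction, and in every case the contradiction takes the form of a forbidden pair $p \in \I(q,r)$, $q \in \I(p,r)$, which the $\AT$-free betweenness law excludes.

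First I would record the easy consequences of the hypotheses. Since $u \in \I(v,x)$, the two witnessing paths have $u,v,x$ as endpoints and avoid $N[x]$ and $N[v]$ respectively, so $u,v,x$ are pairwise nonadjacent and distinct; likewise $v \in \I(u,y)$ makes $u,v,y$ pairwise nonadjacent. Also $x \ne y$, since otherwise $v \in \I(u,y) = \I(u,x)$ would contradict $u \in \I(v,x)$. The one nonobvious preliminary is that $x$ and $y$ are nonadjacent: if the edge $xy$ existed, appending it to the witnessing path from $u$ to $x$ avoiding $N[v]$ would give a path from $u$ to $y$ avoiding $N[v]$ (legitimate since $x,y \notin N[v]$), while the witnessing path from $v$ to $u$ avoiding $N[y]$ already exists; together these give $u \in \I(v,y)$, contradicting $v \in \I(u,y)$. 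Hence $u,v,x,y$ are pairwise nonadjacent.

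For the first condition, suppose for contradiction that no path from $u$ to $y$ avoids $N[x]$. A path from $u$ to $v$ avoiding $N[x]$ exists, so $u$ and $v$ lie in a common component $C$ of $G - N[x]$, while $y \notin C$ (otherwise we could travel $u \rightsquigarrow v \rightsquigarrow y$ inside $C$). The witnessing path from $v$ to $y$ avoiding $N[u]$ must then meet $N[x]$, say at a vertex $w$; its initial segment from $v$ to $w$ avoids $N[u]$, and after appending the edge $wx$ in case $w \ne x$ (legitimate since $x \notin N[u]$) we obtain a walk, hence a path, from $v$ to $x$ avoiding $N[u]$. Combined with a path from $v$ to $u$ inside $C$, which avoids $N[x]$, this shows $v \in \I(u,x)$, contradicting $u \in \I(v,x)$. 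The second condition is symmetric: if no path from $u$ to $x$ avoids $N[y]$, then $u$ and $v$ share a component of $G - N[y]$ not containing $x$, so the witnessing path from $u$ to $x$ avoiding $N[v]$ meets $N[y]$; re-attaching an edge to $y$ (legitimate since $y \notin N[v]$) produces a path from $u$ to $y$ avoiding $N[v]$, hence $u \in \I(v,y)$, contradicting $v \in \I(u,y)$. With both conditions established, $u \in \I(x,y)$.

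I expect the only genuine difficulty to lie in the bookkeeping of which neighborhood each constructed path must avoid. The recurring device is that cutting a path at a vertex $w$ lying in some $N[t]$ and re-attaching the edge $wt$ relocates the endpoint to $t$ while preserving avoidance of an unrelated neighborhood $N[s]$ --- but only when $t \notin N[s]$, which is exactly where the pairwise nonadjacency of $u,v,x,y$, and in particular the nontrivial preliminary $x \not\sim y$, is used. The one conceptual point worth flagging is the choice of target contradictions: it is cleaner to aim for the specific betweennesses $v \in \I(u,x)$ and $u \in \I(v,y)$, each clashing directly with a hypothesis via the $\AT$-free law, than to attempt to exhibit an asteroidal triple by hand.
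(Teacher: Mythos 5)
Your proof is correct, and it is organized differently from the paper's. The paper first disposes of the degenerate case $x=y$ (which would force the asteroidal triple $\{u,v,x\}$) and then argues through the components of $G-N[u]$: the path from $v$ to $y$ avoiding $N[u]$ gives $C^u(v)=C^u(y)$, while $C^u(x)$ must be a different component of $G-N[u]$ since otherwise $\{u,v,x\}$ is an asteroidal triple; because $N[x]$ is then disjoint from $C^u(v)$ (and $N[y]$ from $C^u(x)$), the two required paths are obtained by direct concatenation, e.g.\ the $u$-$v$ path avoiding $N[x]$ extended inside $C^u(v)$ to reach $y$. You instead prove each of the two defining path conditions by contradiction, cutting a witnessing path at its first entry into the blocking neighborhood and reattaching an edge to produce $v\in\I(u,x)$ (against $u\in\I(v,x)$) or $u\in\I(v,y)$ (against $v\in\I(u,y)$), which the betweenness law for $\AT$-free graphs forbids. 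The ingredients are the same --- your ``first vertex outside the component lies in $N[x]$'' step is exactly the paper's component reasoning --- but your double-contradiction layout has the advantage of spelling out the second path, whose justification in the paper (``since $v\in\I(u,y)$'') is left implicit, and of absorbing the degenerate cases into the same mechanism. One small remark: the reattachment steps only need $x\notin N[u]$ and $y\notin N[v]$, both immediate from the hypotheses, so your preliminary claim $x\not\sim y$ is never actually invoked; it is correct, but could be dropped.
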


\bigskip 

\begin{lemma}
\label{lm 3.4}
Let \,$G$\, be an \,$\AT$-free graph. 
Any five  vertices satisfy the following 
property. 
\begin{multline}
a \in \I(x,b) \quad \text{\rm{and}} \quad b \in \I(y,z) 
\quad \text{\rm{and}} \quad a \notin N[y] \cup N[z] \quad \Rightarrow \\
a \in \I(x,y) \quad\text{\rm{or}}\quad a \in \I(x,z) \quad\text{\rm{or}}\quad 
a \in \I(y,z)
\end{multline}
\end{lemma}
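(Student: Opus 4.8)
\medskip

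\noindent\emph{Proof plan.}
The plan is to argue directly with the paths that realize the hypotheses, invoking Lemma~\ref{prop 3.3} whenever a shortcut is available. Fix a path $P\colon a\leadsto x$ avoiding $N[b]$ and a path $P'\colon a\leadsto b$ avoiding $N[x]$ (these exist since $a\in\I(x,b)$), and a path $Q\colon b\leadsto y$ avoiding $N[z]$ and a path $Q'\colon b\leadsto z$ avoiding $N[y]$ (since $b\in\I(y,z)$). From these one reads off the non-adjacencies $a\notin N[b]$, $\,b\notin N[x]\cup N[y]\cup N[z]$, $\,y\notin N[z]$, $\,z\notin N[y]$, which, together with the hypothesis $a\notin N[y]\cup N[z]$, are exactly what makes the separator language below legitimate. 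Write $u\leadsto_S v$ for ``$G$ has a $u$--$v$ path avoiding $S$''. The basic splicing step is that $u\leadsto_{N[w]}v$, $v\leadsto_{N[w]}t$ and $v\notin N[w]$ together force $u\leadsto_{N[w]}t$; applying it to $Q$ and $Q'$ shows that \emph{if there is an $a$--$b$ path avoiding $N[y]$ and also one avoiding $N[z]$, then $a\leadsto_{N[z]}y$ and $a\leadsto_{N[y]}z$, i.e.\ $a\in\I(y,z)$ and we are done.}

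\medskip

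\noindent Suppose instead this fails. By the symmetry of the statement in $y$ and $z$ we may assume that no $a$--$b$ path avoids $N[z]$, so $N[z]$ separates $a$ from $b$; since $Q$ keeps $y$ in the component of $b$ in $G-N[z]$, the set $N[z]$ also separates $a$ from $y$, hence $a\not\leadsto_{N[z]}y$ and $a\notin\I(y,z)$, so we must produce $a\in\I(x,y)$ or $a\in\I(x,z)$. If some $a$--$b$ path avoids $N[y]$, then $b\leadsto_{N[y]}a$, and $Q$ must avoid $N[a]$: otherwise the hop from $a$ to a vertex of $Q\cap N[a]$ followed by $Q$ would be an $a$--$y$ path avoiding $N[z]$ (it stays out of $N[z]$ because $a\notin N[z]$ and $Q$ avoids $N[z]$), contradicting $a\not\leadsto_{N[z]}y$. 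So $Q$ witnesses $b\leadsto_{N[a]}y$, whence $b\in\I(a,y)$, and Lemma~\ref{prop 3.3} --- whose hypotheses $a\in\I(b,x)$ and $b\in\I(a,y)$ now hold --- gives $a\in\I(x,y)$.

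\medskip

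\noindent The real case is therefore that $N[y]$ too separates $a$ from $b$; arguing with $Q'$ exactly as above then shows $N[y]$ separates $a$ from $z$ as well, so $a\not\leadsto_{N[y]}z$, and Lemma~\ref{prop 3.3} is now useless since $b\in\I(a,y)$ and $b\in\I(a,z)$ both fail. Here I would feed the hypotheses into the defining implication $z\in\I(x,y)\Rightarrow x\notin\I(z,y)$ to obtain two further separators: from $a\in\I(x,b)$ and $P'$, that $N[a]$ separates $x$ from $b$; and from $b\in\I(y,z)$ and $Q$, that $N[b]$ separates $y$ from $z$. The crucial consequence is that $x$ must lie in the component of $a$ in $G-N[y]$ or in the component of $a$ in $G-N[z]$: if it lay in neither, then $P$ would meet both $N[y]$ and $N[z]$, and tracing $P$ back from $x$ to its last vertex in $N[z]$ (resp.\ $N[y]$) and then hopping to $z$ (resp.\ $y$) would give $z\leadsto_{N[b]}x$ \emph{and} $y\leadsto_{N[b]}x$, hence $y\leadsto_{N[b]}z$, contradicting that $N[b]$ separates $y$ from $z$.

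\medskip

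\noindent Say $x$ lies in the component of $a$ in $G-N[z]$ (the mirror case is symmetric and delivers $\I(x,y)$); then $a\leadsto_{N[z]}x$, so it remains only to show $a\leadsto_{N[x]}z$, and since $a\leadsto_{N[x]}b$ via $P'$ it suffices to route $b$ to $z$ avoiding $N[x]$ --- if $Q'$ already does this, we are done. The leftover possibility, that $Q'$ (or, in the mirror case, $Q$) is forced through $N[x]$, together with the degenerate sub-cases $x\in N[y]$ or $x\in N[z]$, is where the work concentrates: I expect these to be closed by combining the five separators above with one more use of the $\I$-implication (equivalently, by producing an asteroidal triple among $\{x,y,z\}$-type vertices), and this is the only genuine obstacle; everything else is bookkeeping of which of $N[x],N[y],N[z],N[a],N[b]$ each subpath avoids.
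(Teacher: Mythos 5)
The parts you actually wrote out are correct: the splicing reduction to the case where (WLOG) $N[z]$, and then also $N[y]$, separates $a$ from $b$; the observation that then $a\notin\I(y,z)$; the clever use of Lemma~\ref{prop 3.3} with $b\in\I(a,y)$ when some $a$--$b$ path avoids $N[y]$; the two extra separators ($N[a]$ between $x$ and $b$, $N[b]$ between $y$ and $z$); and the dichotomy that $x$ lies in $a$'s component of $G-N[y]$ or of $G-N[z]$. But the proposal stops being a proof exactly at the point you flag yourself: in the remaining case you still owe the second half of the betweenness, namely an $a$--$z$ path avoiding $N[x]$ (resp.\ an $a$--$y$ path avoiding $N[x]$), and you offer only the hope that $Q'$ can be rerouted and that ``one more use of the $\I$-implication'' will do it; the degenerate sub-cases $x\in N[y]$ or $x\in N[z]$ are likewise left open. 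As submitted this is a plan with an acknowledged hole at the crux, not a proof.

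For what it is worth, the hole closes more cheaply than by fighting with $Q'$: in your remaining case $N[z]$ separates $a$ from $b$, so the path $P'$ (which avoids $N[x]$) must contain a vertex of $N[z]$; truncating $P'$ at its first such vertex and appending $z$ gives an $a$--$z$ path avoiding $N[x]$ provided $x\notin N[z]$, and together with $a\leadsto_{N[z]}x$ this is $a\in\I(x,z)$ (symmetrically $a\in\I(x,y)$ in the mirror branch). The degenerate cases also fall quickly: $x\in N[y]\cap N[z]$ is impossible, since then $y,x,z$ would be a $y$--$z$ path avoiding $N[b]$, contradicting your own separator; and if $x$ is adjacent (or equal) to exactly one of $y,z$, your dichotomy argument, run with the edge from $x$ to that vertex appended, forces $a\leadsto x$ in the complement of the other closed neighborhood, after which the $P'$-truncation above finishes. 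Note finally that your route genuinely differs from the paper's, which cases on whether $C^b(a)$ coincides with $C^b(y)$, with $C^b(z)$, or with neither, and argues directly with paths; your reduction through Lemma~\ref{prop 3.3} is an attractive alternative for one branch, but without the missing step the argument is incomplete.
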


\bigskip 

(In~\cite{kn:chvatal} \:Chvat\'al\,  describes a subclass of convex geometries by a property similar to Lemma~\ref{lm 3.4}.) A component of a graph is a maximal subset of vertices \;of \:which  
\;every pair is connected by a path. 
\begin{lemma}
\label{lemma 3.5}
Let \,$G$\, be an \,$\AT$-free graph. Let \,$X,Y \subseteq V$ \,and let  
\:$Y=\{y_1,\dots,y_k \}$. 
Assume that \,$Y \cap \sigma(X)= \es$\, and that no subset of \,$Y$\, 
induces a cycle on \,$X$. \:If \,\textemdash\, for all \,$i \in [k-1]$ 
\,\textemdash\, there exist $x_i \in X$ such that 
\:$y_{i+1} \in \I(y_i,x_i)$ \:then  
\[Y \,\subseteq\, N[y_1] \,\cup\, C
\quad \text{\rm{where $C$ is a component of the graph $G-N[y_1]$.}}\] 
\end{lemma}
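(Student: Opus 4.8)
The plan is to distil two combinatorial facts from the hypotheses and then induct on $k$.

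\textbf{Step 1: reading off the hypotheses.} Since $y_{i+1}\in\I(y_i,x_i)$ and $\I(y_i,x_i)\subseteq\sigma(y_i+X)$, the containment $y_{i+1}\in\sigma(y_i+X)$ holds for every $i$. Hence if $y_a\in\sigma(y_b+X)$ held for some $a<b$, then $\{y_a,y_{a+1},\dots,y_b\}$, read in that cyclic order, would meet all the requirements in the definition of ``inducing a cycle on $X$'', contradicting the hypothesis; so $y_a\notin\sigma(y_b+X)$ whenever $a<b$ (this is where acyclicity enters), and with $Y\cap\sigma(X)=\es$ this rules out in particular $y_c\in\I(x,x')$ for $x,x'\in X$. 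I also record the elementary fact that if $y_m\in\I(y_1,x)$ for some $x\in X$, then $y_m\notin N[y_1]$, $x\notin N[y_1]$, and $y_m$ and $x$ lie in a common component of $G-N[y_1]$: the path from $y_m$ to $x$ witnessing $\I(y_1,x)$ avoids $N[y_1]$ and is nontrivial because $y_m\in Y$ while $x\in\sigma(X)$. Applied to $y_2\in\I(y_1,x_1)$ this gives $y_2\notin N[y_1]$ and fixes the only possible component, $C:=$ the one containing $y_2$ (equivalently $x_1$).

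\textbf{Step 2: a skip step.} Suppose $y_{i+1}\in\I(y_i,x')$ with $x'\in X$, together with $y_i\in\I(y_{i-1},x_{i-1})$ and $y_{i+1}\notin N[y_{i-1}]\cup N[x_{i-1}]$. Applying Lemma~\ref{lm 3.4} with $a=y_{i+1}$, $b=y_i$, $x=x'$, $y=y_{i-1}$, $z=x_{i-1}$ leaves the options $y_{i+1}\in\I(x',y_{i-1})$, $y_{i+1}\in\I(x',x_{i-1})$, and $y_{i+1}\in\I(y_{i-1},x_{i-1})$; the middle one lies in $\sigma(X)$ and is excluded, so in either surviving case $y_{i+1}$ is between $y_{i-1}$ and an element of $X$. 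Iterating, starting from $y_{i+1}\in\I(y_i,x_i)$ and going downward, I obtain: if $y_m\notin N[y_j]\cup N[x_j]$ for all $j\le m-2$, then $y_m\in\I(y_1,x^\ast)$ for some $x^\ast\in\{x_1,\dots,x_{m-1}\}$; in particular, by the fact above, $y_m\notin N[y_1]$ and $y_m$ lies in the component of $G-N[y_1]$ containing $x^\ast$.

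\textbf{Step 3: the induction and its crux.} I would prove $Y\subseteq N[y_1]\cup C$ by induction on $k$; $k\le2$ is immediate. For the step, assume the conclusion fails and let $m$ be least with $y_m\notin N[y_1]$ and $y_m\notin C$. By minimality $\{y_1,\dots,y_{m-1}\}\subseteq N[y_1]\cup C$, and since dropping $y_{m+1},\dots,y_k$ preserves all hypotheses we may assume $m=k$. If some condition ``$y_k\notin N[y_j]\cup N[x_j]$'' fails then $y_k$ is adjacent to $y_j$ or to $x_j$; otherwise the skip step gives $y_k\in\I(y_1,x^\ast)$ for some $x^\ast\in\{x_1,\dots,x_{k-1}\}$, and $y_k$ lies in the component of $x^\ast$. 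In either case, obtaining the contradiction $y_k\in C$ comes down to knowing that the $x_i$'s — and the earlier $y_j$'s lying outside $N[y_1]$ — all sit in $C$. The $y_j$-part is the induction hypothesis; the $x_i$-part is the point at which the component is forced not to change, and I expect it to be the main obstacle. My plan is to carry ``$x_i\in N[y_1]\cup C$'' as an extra clause of the induction hypothesis and, when $x_i\notin N[y_1]$, to combine the two betweenness paths witnessing $y_{i+1}\in\I(y_i,x_i)$ with the relation $y_{i+1}\in\I(y_1,x^\ast)$ coming from the skip, the non-containment $y_a\notin\sigma(y_b+X)$ of Step~1, and the $\AT$-free implication $z\in\I(x,y)\Rightarrow x\notin\I(z,y)$ (and, if helpful, Lemma~\ref{prop 3.3}), so as either to route an $N[y_1]$-avoiding path from $x_i$ into $C$ or to exhibit an asteroidal triple. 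Reconciling the several ``avoids $N[\cdot]$'' paths into a single ``avoids $N[y_1]$'' path is the crux; the remainder is bookkeeping.
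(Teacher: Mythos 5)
Your Steps 1 and 2 are sound and in fact parallel the paper's use of Lemma~\ref{lm 3.4}: the acyclicity hypothesis does exclude $y_a\in\sigma(y_b+X)$ for $a<b$, and the cascade of applications of Lemma~\ref{lm 3.4} (with the $\I(x',x_{j})$-option killed by $Y\cap\sigma(X)=\es$) is exactly how one pushes a betweenness relation down towards $y_1$. But the proof stops where the actual difficulty begins, and you say so yourself. Two things are missing and neither is bookkeeping. First, even in the ``clean'' case your cascade only yields $y_m\in\I(y_1,x^{\ast})$ for \emph{some} $x^{\ast}\in\{x_1,\dots,x_{m-1}\}$, which places $y_m$ in the component of $G-N[y_1]$ containing $x^{\ast}$ --- but nothing in the hypotheses says that component is $C$: the lemma constrains the $y_i$'s, not the $x_i$'s, and $x_i$ for $i\geq 2$ may a priori lie in $N[y_1]$ or in a different component. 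Your proposed fix, carrying ``$x_i\in N[y_1]\cup C$'' as an extra induction clause, is not a strengthening you are entitled to assume; establishing it (or circumventing it) is precisely the content of the lemma, and you only sketch a hope (``route an $N[y_1]$-avoiding path \dots or exhibit an asteroidal triple'') of how to do it. Second, when the non-adjacency conditions fail, i.e.\ $y_k\in N[y_j]$ or $y_k\in N[x_j]$ for some $j$, adjacency only helps if the vertex $y_j$ or $x_j$ is itself known to lie in $C$; if it lies in $N[y_1]$ you get nothing, and this subcase is left entirely open.

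For comparison, the paper does not induct on a prefix; it takes a minimal counterexample ($Y$ meeting at least two components of $G-N[y_1]$, $|Y|$ minimal, so $|Y|\geq 3$ and $y_j\notin\I(y_i,x)$ for $j>i+1$), applies Lemma~\ref{lm 3.4} once to force $y_3\in\left(N(x_1)\cup N(y_1)\right)\setminus\left(N(x_1)\cap N(y_1)\right)$, and then disposes of the two adjacency cases by component arguments in $G-N[y_2]$ resp.\ $G-N[y_3]$ (e.g.\ $C^{y_2}(x_1)\neq C^{y_2}(y_1)$ yields an $x_1,y_2$-path avoiding $N[y_1]$, contradicting minimality; in the other case one shows $x_2\in N(y_1)$, separates $C^{y_3}(x_1)$ from $C^{y_3}(x_2)$ and $C^{y_3}(y_2)$, and uses a vertex $u\in N(C^{y_3}(x_1))\setminus N[y_1]$). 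That case analysis is exactly the part your proposal defers, so as it stands the argument has a genuine gap rather than an alternative complete proof.
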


\bigskip 

\begin{theorem}
Let \,$G$\, be an \,$\AT$-free graph. 
\,The convex sets in \,$G$\, constitute a 
convex geometry \,on \,$V(G)$. 
\end{theorem}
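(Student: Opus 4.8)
The plan is to verify the three defining properties of a convex geometry for the set system $(V,\mathcal{C})$ whose convex sets are the subsets of $V$ closed under the betweenness relation $\I$. First I would check that this system is an alignment. That $V\in\mathcal{C}$ and $\es\in\mathcal{C}$ is immediate, and closure under intersection is routine: if $X_1,X_2$ are both closed under $\I$ and $z\in\I(x,y)$ with $x,y\in X_1\cap X_2$, then $z$ lies in each $X_i$, hence in the intersection. So $(V,\mathcal{C})$ is an alignment, and by Lemma~\ref{property (2.3)} it suffices to show that whenever a set $Y=\{y_1,\dots,y_k\}$ induces a cycle on a set $X$, we have $Y\subseteq\sigma(X)$.

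So suppose, for contradiction, that $Y$ induces a cycle on $X$ but $Y\not\subseteq\sigma(X)$; after relabelling cyclically and discarding elements already in $\sigma(X)$, I would reduce to a minimal configuration in which $Y\cap\sigma(X)=\es$ and no proper subset of $Y$ induces a cycle on $X$, while still $y_{i+1}\in\sigma(y_i+X)$ for all $i$ (indices mod $k$). The key translation is that $y_{i+1}\in\sigma(y_i+X)$ means $y_{i+1}$ is forced into the closure of $\{y_i\}\cup X$, which — unwinding the definition of convexity via $\I$ — should give some $x_i\in X$ with $y_{i+1}\in\I(y_i,x_i)$ (or $y_{i+1}=y_i$, excluded by $|Y|\ge 2$ and minimality, or $y_{i+1}\in\sigma(X)$, excluded by assumption). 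This is exactly the hypothesis of Lemma~\ref{lemma 3.5}, applied to the path $y_1,y_2,\dots,y_k$ (using the first $k-1$ cyclic steps). That lemma yields $Y\subseteq N[y_1]\cup C$ for a single component $C$ of $G-N[y_1]$.

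Now I would exploit the last edge of the cycle, $y_1\in\sigma(y_k+X)$, i.e. $y_1\in\I(y_k,x_k)$ for some $x_k\in X$. Being between $y_k$ and $x_k$ means in particular there is a path from $y_1$ to $x_k$ avoiding $N[y_k]$, and a path from $y_1$ to $y_k$ avoiding $N[x_k]$. The aim is to contradict the structural conclusion of Lemma~\ref{lemma 3.5}. Here is where Lemma~\ref{prop 3.3} and Lemma~\ref{lm 3.4} should come into play: using $y_1\in\I(y_k,x_k)$ together with $y_k$'s membership in $N[y_1]\cup C$ and iterated applications of the four- and five-vertex transfer properties, I would push the "betweenness" witness $x_k$ through the chain to conclude that some $y_j$ (in fact $y_1$ itself, or an element forcing a shorter induced cycle) already lies in $\I$ of two elements of $X$, hence in $\sigma(X)$ — contradicting $Y\cap\sigma(X)=\es$ — or else that a proper cyclic subset of $Y$ induces a cycle on $X$, contradicting minimality.

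The main obstacle will be the last paragraph: turning the purely structural statement "$Y\subseteq N[y_1]\cup C$" back into a statement about the betweenness relation and squeezing a contradiction out of the wrap-around edge $y_1\in\sigma(y_k+X)$. This is the step that genuinely uses the $\AT$-freeness beyond what is already packaged in Lemma~\ref{lemma 3.5}; Lemmas~\ref{prop 3.3} and~\ref{lm 3.4} are presumably tailored precisely to make this transfer go through, and the bookkeeping of which vertex avoids which closed neighborhood (and whether the relevant $y_j$ lies in $N[y_1]$ or in the component $C$) is where the care is needed. Once that contradiction is in hand, Lemma~\ref{property (2.3)} closes the argument and the convex sets of $G$ form a convex geometry on $V(G)$.
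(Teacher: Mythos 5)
Your setup matches the paper's: reduce via Lemma~\ref{property (2.3)} to showing that a set $Y$ inducing a cycle on $X$ must lie in $\sigma(X)$, take a minimum counterexample with $Y\cap\sigma(X)=\es$, and extract witnesses $x_i\in X$ with $y_{i+1}\in\I(y_i,x_i)$. But the heart of the proof is exactly the part you defer (``push the betweenness witness through the chain \dots''), and what you sketch there is not an argument --- it is a restatement of the goal. The paper's proof does something concrete at this point that your plan does not contain: it applies Lemma~\ref{lm 3.4} to every consecutive triple to obtain, for all $i$, the dichotomy $y_{i+2}\in\bigl(N(x_i)\cup N(y_i)\bigr)\setminus\bigl(N(x_i)\cap N(y_i)\bigr)$, and then splits into two cases. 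If $y_{i+2}\in N(y_i)$ for every $i$, a chain of neighborhood containments ($N(x_{i-1})\cap N[y_i]\subseteq N(x_i)\cap N[y_i]$, plus $y_i\in N(x_{i+1})$) forces $y_i\in N(x_j)$ for all $i,j$, contradicting that $Y$ induces a cycle. Otherwise some $y_{j+2}\in N(x_j)$ (say $j=1$), and Lemma~\ref{lemma 3.5} is applied to the chain \emph{re-based at $y_2$} (so that $y_1$ and $y_3$ lie in one component of $G-N[y_2]$, and $x_1\in N[y_3]$ supplies a path from $x_1$ to $y_1$ avoiding $N[y_2]$), which yields $y_2\in\I(x_1,y_1)$ \emph{and} $y_1\in\I(x_1,y_2)$ simultaneously --- contradicting the defining betweenness property of $\AT$-free graphs. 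None of this case analysis, nor the specific contradiction being targeted, appears in your proposal; you explicitly flag it as the open obstacle, so the proof is incomplete precisely where $\AT$-freeness has to do its work.

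Two smaller points. First, your application of Lemma~\ref{lemma 3.5} to the full chain based at $y_1$ sits uneasily with that lemma's hypothesis that no subset of $Y$ induces a cycle on $X$ --- in your situation $Y$ itself does; the paper invokes the lemma only in Case~2, for the shifted chain and after the adjacency $y_3\in N(x_1)$ is in hand, which is what makes the conclusion usable. Second, the step ``$y_{i+1}\in\sigma(y_i+X)$ hence $y_{i+1}\in\I(y_i,x_i)$ for some $x_i\in X$'' is asserted rather than argued: $\sigma$ is an iterated closure, so membership in $\sigma(y_i+X)$ does not by itself produce a single witness $x_i\in X$; if you keep this reduction you need to justify it (e.g.\ via minimality of the counterexample), as it is the bridge from the abstract cycle condition to the graph-theoretic lemmas.
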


\section{Generating $\AT$-free orders}

When $(E,\mathcal{C})$ is a convex geometry then 
$(E,\Bar{\mathcal{C}})$ is an \,\underline{antimatroid}\, 
and this defines all 
antimatroids \,\textemdash\, here we write 
\[\Bar{\mathcal{C}}=\{\,E \setminus C\;|\; C \in \mathcal{C}\,\}.\] 
Crapo~\cite{kn:crapo} \,characterizes formal languages    
that are antimatroids \,as \,follows. 

\begin{definition}
A language \,$L$\, is an antimatroid 
\,if \,its \,words \:satisfy \:the following properties. 
\begin{enumerate}[\rm (1.)]
\item Every symbol of the alphabet occurs in at least one word. 
\item Every word of \,$L$\, contains at most one copy of 
every symbol in the alphabet. 
\item Every prefix of a word in \,$L$ \,is \,in \,$L$. 
\item If \,$s,t \in L$ \,and \,if \,$s$ contains at least one 
symbol that is not in \,$t$ 
\,then \,there is a symbol \,$x \in s$ such that \,$tx \in L$. 
\end{enumerate} 
\end{definition}

\bigskip 

\,\textemdash\, Observe that \,\textemdash\, when $L$ is 
the language \:whose words are prefixes of  $\AT$-free orders of a graph 
\:then \:$L$ \:is \:an \:antimatroid. 
The \underline{basic words} of $L$ are those of maximal 
length \;which \:are \:the \:$\AT$-free orders. 

\begin{definition}
A linear order \:$<$ \:of the vertices 
of a graph \:is an 
\:$\AT$-free order \:if any three vertices satisfy the following property. 
\[z \in \I(x,y) \quad \Rightarrow\quad 
x < z  \quad \text{\rm{or}}\quad 
y < z\] 
\end{definition}
A graph is $\AT$-free \,if and only if\, it has 
an $\AT$-free order~\cite{kn:corneil}.

\bigskip 

Pruesse \,and \,Ruskey~\cite{kn:pruesse2,kn:pruesse1} considered the problem of 
producing a Gray code for the 
basic words of an antimatroid.  
\par
Let $L$ be an antimatroid. 
Consider the graph whose vertices are the basic words of $L$ 
\;two vertices being adjacent when one is obtained 
from the other by a 
transposition of an adjacent pair.   
The \,\underline{prism}\, is obtained from two copies 
($+$ and $-$) of this graph \:and the addition of edges joining 
$+/-$ copies of similar vertices. 
\:Pruesse \:and \:Ruskey \:show that this prism is 
Hamiltonian \:for \:all \:antimatroids. 
\:Their generic algorithm \:generates 
\:all the basic words of $L$ \:in the order of a Hamiltonian traversal  
of the prism \,\textemdash\, whilst reporting only 
the (transpositions in the) $+$ copies.   

\bigskip 

Assume \,that a graph \,$G$\, is connected and $\AT$-free. 
Let $\omega$ be a vertex \:such \:that  
the number of vertices \:in the largest component of $G-N[\omega]$ 
\:is \:as \:large \:as \:possible. 
\:Let \:$C$ be a largest component of $G-N[\omega]$ 
\:and \:let \:$S=N(C)$ and $\Omega=V\setminus (C \cup S)$. 
\,When $G$ is not a clique then $\{\Omega,S,C\}$ is a 
partition of $V(G)$. 
\par 
By our choice of $\omega$ \:every vertex of $\Omega$ is 
adjacent to every vertex of $S$ \,\textemdash\, that is 
\,\textemdash\, $\Omega$ is a module, \:hence, \:convex. 
Since $G$ is $\AT$-free \:$C \cup S$ is convex as well. 

\bigskip 

Consider $\AT$-free orders for $G[\Omega]$ and 
$G[C \cup S]$ \,\textemdash\, say 
\[a=a_1\,\dots \,a_n \quad\text{and}\quad 
b=b_1\,\dots\,b_m.\]
Notice that the linear order 
\[\beta=a_1\,\dots\,a_n\,b_1\,\dots\,b_m\] 
is an $\AT$-free order for $G$ \,\textemdash\, we 
call \:$\beta$ \:a \:\underline{canonical order} 
\:when $a$ and $b$ are that of $G[\Omega]$  and $G[C \cup S]$. 
\;It is easy to obtain 
a canonical order in polynomial time. 

\bigskip 

Consider an arbitrary order $\sigma=v_1\,\dots\,v_n$ of the 
vertices of $G$. We write $\sigma_1$ for the linear order 
induced on $\Omega$  
and $\sigma_2$ for the linear order induced on $C \cup S$. 
Observe that 
$\sigma$ is an $\AT$-free order \:if \:and \:only \:if 
\begin{enumerate}[\rm 1.] 
\item $\sigma_1$ and $\sigma_2$ are 
$\AT$-free orders of $\Omega$ and $C \cup S$ 
\item for $\omega \in \Omega$ and $x,y \in C\cup S$ 
\[x \in \I(\omega,y) \quad \text{and}\quad 
\sigma_2^{-1}(x) < \sigma_2^{-1}(y) \quad \Rightarrow \quad 
\sigma^{-1}(\omega) < \sigma^{-1}(x)\] 
\end{enumerate}
that is \,\textemdash\, \:all \:vertices of $\Omega$ should appear 
\:before \:the first element of a pair $x,y \in C \cup S$ 
that satisfies $x \in \I(\omega,y)$ \:and \:that \:is 
\:in \:a \:`wrong' \:order 
\,\textemdash\, namely \,\textemdash\, 
$\sigma^{-1}(y) > \sigma^{-1}(x)$. 

\bigskip 

In \:proving \:that \:the prism of $G$ is Hamiltonian \:we 
may assume that the prisms of 
\,$\Omega$\, and \,$C \cup S$ \:are \:that. 
\,\textemdash\, Furthermore \,\textemdash\, we may 
assume that $\{+\beta,-\beta\}$ 
is an edge of 
\:both  \:Hamiltonian \:cycles. 
\:A Hamiltonian cycle in the prism of $G$ 
\:that uses the edge $\{+\beta,-\beta\}$ \:is 
easily obtained from this~\cite[Theorem~3.3]{kn:pruesse1}. 
\par 
It 
\:follows \:that 
\:the 
$\AT$-free orders of \,$G$\, can be 
generated \:such \:that \:each order differs from its predecessor 
\:by \:at \:most \:one \:or \:two \:adjacent \:transpositions. 
\par
It remains to establish the timebound.  

\bigskip 

\begin{theorem}
The \:$\AT$-free orders \:of an \:$\AT$-free graph can be generated \:in 
\:con\-stant \:amortized \:time.
\end{theorem}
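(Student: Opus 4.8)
The plan is to implement the Pruesse--Ruskey generic algorithm recursively along the decomposition $V(G)=\Omega\cup S\cup C$ introduced above, and to show that the total work over the whole traversal is linear in the number of $\AT$-free orders produced. First I would fix the data structure: the current order $\sigma$ is maintained as a doubly linked list of vertices together with its inverse array $\sigma^{-1}$, so that a single adjacent transposition is performed in $O(1)$ time and $\sigma^{-1}$ is updated in $O(1)$ time. Since consecutive orders in the Gray-code traversal differ by at most one or two adjacent transpositions (as established just before the statement), maintaining $\sigma$ itself is already constant amortized; the real issue is deciding, at each step, which transposition to perform and which recursive copy ($+$ or $-$) to descend into, without rescanning the whole order.

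Next I would set up the recursion on the structural partition. For a clique the problem is trivial (the only $\AT$-free order is any order, and the prism is a single edge or handled directly), giving the base case. For the general case, by the remarks preceding the statement, a Hamiltonian cycle of the prism of $G$ through the edge $\{+\beta,-\beta\}$ is assembled from Hamiltonian cycles of the prisms of $G[\Omega]$ and $G[C\cup S]$, each through their own canonical edge; recursively these are already generated in constant amortized time. The key combinatorial fact to exploit is the characterization of $\AT$-free orders of $G$ in terms of $\sigma_1,\sigma_2$ and condition~2 on triples $\omega\in\Omega$, $x,y\in C\cup S$: because $\Omega$ is a module and every vertex of $\Omega$ must precede the earlier element $x$ of any ``wrongly ordered'' betweenness pair in $C\cup S$, the interleaving of the $\Omega$-block and the $(C\cup S)$-block is essentially forced given $\sigma_2$, so a transposition inside the $(C\cup S)$-copy at worst triggers the relocation of the entire $\Omega$-block by one position, an $O(|\Omega|)$ operation. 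I would precompute, once, for each vertex of $C\cup S$ whether it is the small end of such a betweenness pair, so that at each step we know in $O(1)$ whether the $\Omega$-block needs to move and in which direction.

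The main obstacle is the amortized charging argument for exactly these $\Omega$-block relocations: a single adjacent transposition in the $(C\cup S)$-recursion is cheap there, but lifting it to $G$ may cost $O(|\Omega|)$. I would resolve this by charging each such relocation to the $\Omega$-many new orders of $G$ that it creates: moving the $\Omega$-block past one position of $C\cup S$ is immediately followed (or preceded) by running the entire recursively generated Gray code of $G[\Omega]$ in place, producing at least $|\Omega|$ distinct $\AT$-free orders of $G$, so the $O(|\Omega|)$ cost is $O(1)$ per order. Combined with the inductive hypothesis that the $(C\cup S)$-recursion and the $\Omega$-recursion are each constant amortized, and the fact that the top-level assembly of the two Hamiltonian cycles into one (via \cite[Theorem~3.3]{kn:pruesse1}) adds only $O(1)$ bookkeeping per transition, a straightforward induction on $|V(G)|$ then gives that the whole generation runs in constant amortized time. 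A final routine check is that the one-time preprocessing --- computing the partition $\{\Omega,S,C\}$, the canonical order $\beta$, and the betweenness flags --- is polynomial and hence absorbed into the amortized bound once the number of generated orders is at least polynomial, which holds for every connected $\AT$-free graph that is not a clique (and the clique case is the trivial base).

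\bqed
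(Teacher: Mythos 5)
Your proposal reproduces the structural setup that the paper establishes \emph{before} the theorem (the partition $\{\Omega,S,C\}$, the canonical order, Pruesse--Ruskey prism Hamiltonicity), but it misses the actual content of the paper's proof: the transposition oracle. The Pruesse--Ruskey generic algorithm achieves constant amortized time only if, at each step, one can decide in (amortized) constant time whether swapping two adjacent elements of the current order yields another basic word, i.e.\ another $\AT$-free order. Your plan never says how this test is done. Your suggestion to ``precompute, once, for each vertex of $C\cup S$ whether it is the small end of such a betweenness pair'' cannot work as stated, because the relevant condition --- $x \in \I(\omega,y)$ \emph{and} $\sigma_2^{-1}(x) < \sigma_2^{-1}(y)$ --- depends on the current order $\sigma_2$, which changes throughout the traversal; the flags are inherently dynamic. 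The paper resolves exactly this point by introducing the counter $h(x)$, observing that $h(x)=\numbad(x,\omega)$ in Sawada's notation, and invoking Sawada's theorem that the $\numbad$ counters can be maintained in constant amortized time during the generation; that citation is the nontrivial step your argument lacks, and without it (or a substitute argument for dynamic maintenance of the swap-validity test, both within $C\cup S$ and for the $\Omega$ condition) the claimed amortized bound is unsupported.

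A second, smaller problem is your accounting model. You treat $\Omega$ as a contiguous block that is relocated one position at a time, each relocation paid for by running a full Gray code of $G[\Omega]$ in place. But the characterization preceding the theorem only forces all of $\Omega$ to precede the first ``wrongly ordered'' vertex of $C\cup S$; before that point the $\Omega$-vertices may interleave arbitrarily with $C\cup S$-vertices, so the $\AT$-free orders of $G$ are not restricted to orders in which $\Omega$ is contiguous, and a Hamiltonian cycle of the prism must visit all of these interleavings. Consequently your charging scheme (one block move amortized against $|\Omega|$ outputs) is based on an unproven --- and in general false --- assumption about the structure of the Gray code, whereas in the Pruesse--Ruskey framework consecutive outputs already differ by at most two adjacent transpositions and the whole cost question reduces to the oracle, which is where the paper (via Sawada) does the real work.
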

\begin{proof}
Pruesse and Ruskey developed a generic algorithm 
to produce all basic words of an antimatroid~\cite{kn:pruesse2,kn:pruesse1}. 
The amortized time complexity is determined by 
an \,\textemdash\, antimatroid specific 
\,\textemdash\, transposition oracle\: 
which answers whether two adjacent elements 
in a basic word may \:swap \:places \:to produce 
another basic word. 

\medskip 

\noindent 
We use the notation introduced above. 
For an $\AT$-free order $\sigma$ \:with an induced 
order $\sigma_2$ on $C\cup S$ \:and $x \in C$ 
\:define \,$h(x)$\, \:as \:follows.  
\[h(x) = \# \;\{\,z\;|\; z \in C \quad \text{and}\quad 
x \in \I(\omega,z) \quad \text{and}\quad 
\sigma_2^{-1}(x) < \sigma_2^{-1}(z)\;\}\]
Then $\sigma$ is an $\AT$-free order \,if \,and \,only \,if 
\,$\sigma_1$ and $\sigma_2$ \:are \:that \:and 
\:$\sigma^{-1}(\omega) < \sigma^{-1}(x)$ when $h(x) \geq 1$. 
We show that $h$ can be maintained 
\:during a swap of two adjacent elements in $\sigma$. 
\:Notice \:that $h$ is easily computable for a canonical order. 

\medskip 

\noindent 
Sawada~\cite[Theorem~15~ff.]{kn:sawada} 
\:introduces  the counter $\numbad(x,y)$ \:for ordered pairs 
$x$ and $y$ \:as the number of vertices $z$ with $x \in \I(y,z)$ 
and $\sigma^{-1}(z) > \sigma^{-1}(x)$. Two elements 
\,$v_j$ and $v_{j+1}$\, can be swapped to produce a new 
$\AT$-free order \:only \:if 
\:$\numbad(v_{j+1},v_j)=0$. \:Sawada \:shows that 
$\numbad$ can be maintained 
during a generation of $\AT$-free orders in constant 
amortized time~\cite[Theorem~13 and Observation~1]{kn:sawada}. 

\medskip 

\noindent 
Notice that $h(x)=\numbad(x,\omega)$. This proves the theorem. 
\bqed\end{proof}

\section{Concluding remark}

The family of ideals \,in a poset \, constitutes \,a convex geometry on the elements 
of the poset.  This convex geometry is usually referred to as a poset shelling. 
A convex geometry is a poset shelling \,if and only if \,its family of 
convex sets is closed 
under unions~\cite{kn:korte}.  
(See~\cite{kn:kashiwabara} for other characterizations.)

\bigskip 

The family of convex sets of the $\AT$-free graph \,shown in the figure 
\,is not a poset shelling. \:To see that \:let $A=\{y_1,z_2,u\}$ and 
let $B=\{y_2,z_1,u\}$. Then $A$ and $B$ are convex \,but \,their union 
is not \,since $u^{\prime} \in \I(z_1,z_2)$.   

\begin{figure}
\begin{center}
\setlength{\unitlength}{1mm}
\thicklines
\begin{picture}(50,50)
\put(10,30){\circle*{1}}
\put(20,40){\circle*{1}}
\put(30,10){\circle*{1}}
\put(30,20){\circle*{1}}
\put(30,30){\circle*{1}}
\put(40,40){\circle*{1}}
\put(50,30){\circle*{1}}
\put(30,10){\line(0,1){20}}
\put(30,30){\line(-1,1){10}}
\put(30,30){\line(1,1){10}}
\put(20,40){\line(1,0){20}}
\put(30,20){\line(-2,1){20}}
\put(30,20){\line(2,1){20}}
\put(10,30){\line(1,1){10}}
\put(50,30){\line(-1,1){10}}
\put(18,42){$y_2$}
\put(40,42){$y_1$}
\put(8,28){$z_2$}
\put(50.5,28){$z_1$}
\put(29,7.5){$u$}
\put(31,29){$u^{\prime}$}
\end{picture}
\end{center}
\caption{The family of convex 
sets in this graph is not a poset shelling.} 
\end{figure}
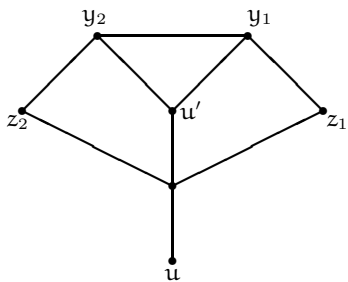


\newpage 

\section*{Appendix: proofs}

\setcounter{section}{2}
\setcounter{lemma}{1} 

\begin{lemma} 
An alignment \,$(E,\mathcal{C})$\, is a convex geometry \,if and only if 
\,any set \,$Y$ \:which induces a cycle on a set 
\,$X$ \:is contained in \,$\sigma(X)$. 
\end{lemma}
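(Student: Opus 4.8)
The plan is to prove the biconditional by connecting the "no induced cycle" condition to the characterization of convex geometries via extreme points, namely property (1.) from the Edelman–Jamison list (``for any $C \in \mathcal{C}$ and $y \notin C$, the element $y$ is extreme in $\sigma(C+y)$''). Both directions will rest on unwinding what it means for the anti-exchange property to fail.

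For the forward direction, suppose $(E,\mathcal{C})$ is a convex geometry and that $Y=\{y_1,\dots,y_k\}$ induces a cycle on $X$; I want to show $Y \subseteq \sigma(X)$. First note that $\sigma(X) \in \mathcal{C}$ and that the relations $y_{i+1}\in\sigma(y_i+X)$ can be rephrased using $C:=\sigma(X)$: since $\sigma(y_i+X)=\sigma(y_i+C)$, the hypothesis says $y_{i+1}\in\sigma(y_i+C)$ cyclically. Suppose for contradiction that some $y_j \notin C$; by relabelling (cyclic symmetry) assume $y_1 \notin C$. Walk around the cycle and let $y_i$ be the last index (starting from $y_1$) that is still outside $C$ — so $y_i \notin C$ but, looking at the successor relation, we can arrange two consecutive elements outside $C$ with the anti-exchange property applicable, or more cleanly: the cycle of length $k$ forces the existence of indices $i$ with $y_i,y_{i+1}\notin C$ and $y_{i+1}\in\sigma(y_i+C)$ while also (going the other way around, or by minimality of the "bad" arc) $y_i\in\sigma(y_{i+1}+C)$, directly contradicting anti-exchange with $Y:=C$, $x:=y_i$, $z:=y_{i+1}$. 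The one subtlety is handling the case where the elements of $Y$ outside $C$ are not cyclically consecutive; there I would take a maximal run of indices outside $C$ and use that the endpoints of the run connect (through elements of $C$) back in a way that still yields an anti-exchange violation — this bookkeeping is the first place care is needed.

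For the converse, assume every set inducing a cycle on some $X$ lies in $\sigma(X)$, and verify the anti-exchange property. Take $Y\in\mathcal{C}$ and $x,z\notin Y$, $x\ne z$, and suppose both $z\in\sigma(x+Y)$ and $x\in\sigma(z+Y)$. Then $\{x,z\}$ induces a cycle on $X:=Y$ (with $k=2$: $y_2=z\in\sigma(y_1+X)$ where $y_1=x$, and $y_1=x\in\sigma(y_2+X)$). By hypothesis $\{x,z\}\subseteq\sigma(Y)=Y$ since $Y$ is convex — contradicting $x,z\notin Y$. Hence the anti-exchange property holds and $(E,\mathcal{C})$ is a convex geometry.

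The main obstacle I anticipate is the combinatorial argument in the forward direction: reducing a long induced cycle to a length-two anti-exchange violation requires choosing the right two consecutive "outside" vertices, and one must be sure the closure relations $y_{i+1}\in\sigma(y_i+X)$ survive when $X$ is replaced by the larger convex set $C=\sigma(X)$ and when intermediate vertices have been absorbed into $C$. I would make this rigorous by inducting on $k$: if some $y_i\in\sigma(X)$ already, then $\sigma(y_{i-1}+X)=\sigma(y_{i-1}+X)$ still contains $y_i$ trivially and one can splice it out, shortening the cycle to length $k-1$ on the same $X$ (after checking $y_{i+1}\in\sigma(y_{i-1}+X)$, which follows from $y_{i+1}\in\sigma(y_i+X)\subseteq\sigma(y_{i-1}+X)$ once $y_i\in\sigma(y_{i-1}+X)$); the base case $k=2$ is exactly anti-exchange. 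This induction cleanly isolates the only real content and matches the equivalence claimed with \cite[Theorem~2.3]{kn:edelman}.
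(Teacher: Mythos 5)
Your proof is correct, but it takes a genuinely different route from the paper. You argue both directions straight from the anti-exchange axiom: the converse is just the observation that a violation of anti-exchange is precisely a $2$-cycle $\{x,z\}$ on the convex set $Y$, which the hypothesis forces into $\sigma(Y)=Y$; the forward direction reduces a $k$-cycle outside $C=\sigma(X)$ to a mutual-containment pair. The paper instead routes both directions through the Edelman--Jamison characterizations: for the forward direction it shows that a cycle element $y\notin\sigma(X)$ fails to be extreme in $\sigma(X+y)$ (their property (1)), and for the converse it establishes the augmentation property (2) by taking an inclusion-minimal $Y\subseteq E\setminus C$ with $C\cup Y$ convex and extracting a cycle when $|Y|\geq 2$. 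Your converse is shorter and only uses the $k=2$ instance of the hypothesis, which is perfectly legitimate given that the paper defines convex geometries by anti-exchange; the paper's converse does more work but directly delivers the augmentation property. One tightening on your forward direction: the bookkeeping you worry about (maximal runs of vertices outside $C$) is unnecessary, and the splicing induction alone would stall exactly in the interesting case $Y\cap\sigma(X)=\es$, where there is nothing to splice. The clean argument is the one you gesture at with ``going the other way around'': $y_{i+1}\in\sigma(y_i+C)$ gives $\sigma(y_{i+1}+C)\subseteq\sigma(y_i+C)$, so following the cycle once around forces all the sets $\sigma(y_i+C)$ to coincide and to contain all of $Y$; hence if some $y_j\in C$ this common set is $C$ and you are done, while if no $y_j\in C$ then any adjacent pair $y_i\neq y_{i+1}$ violates anti-exchange with the convex set $C$. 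Stated this way your forward direction needs no case analysis at all.
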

\begin{proof}
When $Y$ is a cycle on $X$ then $\sigma(X+y) \setminus y$ is not convex 
for any $y \in Y$. When \,$Y\setminus \sigma(X) \neq \es$\, there exists a 
vertex $y$ such that $\sigma(X+y) \setminus y$ is not convex. 
By Edelman and Jamison's characterization \:$(E,\mathcal{C})$ \,is \:not \:a 
\,convex \,geometry. 

\medskip 

\noindent 
Assume that any set \:that induces a cycle on $X$ \:is contained 
in $\sigma(X)$. 

\medskip 

\noindent 
Let $C$ be a convex set and assume that $C \neq E$. 
We show that there exists an element $y \in E \setminus C$ 
which satisfies 
\[C+y \,\in \,\mathcal{C}\] 
(notice that this proves the claim \,\textemdash\, by  
\,Edelman and Jamison's characterization of 
convex geometries). 

\medskip 

\noindent 
Let \:$Y \neq \es$ \:be an \,inclusion-minimal \,subset of $E \setminus C$ such that 
$C \cup Y$ is convex. We claim that \:$|Y|=1$. 
\,\textemdash\,Otherwise \,\textemdash\, $Y$ has at least 
two elements. By the assumption 
\:that $Y$ is \:set\textendash inclusion \:minimal 
\[\forall_{\,y \,\in\, Y} \quad\exists_{\,y^{\prime} \,\in\, Y} 
\quad y^{\prime} \neq y \quad\text{\rm{and}}\quad y^{\prime} \in \sigma(C+y)\] 
which implies that some nonempty subset $Y^{\prime} \subseteq Y$ 
induces a cycle on $C$. \;By the assumption \,\textemdash\, 
that any set which induces a cycle on $X$ is contained 
in $\sigma(X)$ \,\textemdash\, \:$Y^{\prime}\subseteq C$ 
\:which is a contradiction. 
\bqed\end{proof}

\bigskip 

For vertices \,$x$ and $y$\, \:that are \,not adjacent \:we write \,$C^x(y)$\, 
for the component of $G-N[x]$ that contains the vertex $y$.  
\begin{lemma}
Let $G$ be an $\AT$-free graph. Any four vertices satisfy the following 
property. 
\begin{equation*}
u \in \I(v,x) \quad \text{\rm{and}}\quad 
v \in \I(u,y) \quad \Rightarrow \quad u \in \I(x,y)
\end{equation*}
\end{lemma}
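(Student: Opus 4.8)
The plan is to argue by contradiction: assume $u\in\I(v,x)$, $v\in\I(u,y)$ and $u\notin\I(x,y)$, and derive an asteroidal triple, contradicting the betweenness characterization of $\AT$-free graphs. First I would unpack the two hypotheses into connectivity statements. From $u\in\I(v,x)$: the vertices $u,v$ lie in one component of $G-N[x]$, the vertices $u,x$ lie in one component of $G-N[v]$, and $u\notin N[v]\cup N[x]$, $v$ and $x$ are non-adjacent and distinct. Symmetrically, $v\in\I(u,y)$ gives that $u,v$ lie in one component of $G-N[y]$, that $v,y$ lie in one component of $G-N[u]$, and $v\notin N[u]\cup N[y]$, $u$ and $y$ non-adjacent and distinct. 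In particular $\{u,v,x\}$ and $\{u,v,y\}$ are sets of pairwise non-adjacent vertices.

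The next step is to squeeze two ``separation'' facts out of $\AT$-freeness. Consider $\{u,v,x\}$: since $u,x$ are joined in $G-N[v]$ and $u,v$ are joined in $G-N[x]$, if in addition $v,x$ were joined in $G-N[u]$ then $\{u,v,x\}$ would be an asteroidal triple; hence $N[u]$ separates $v$ from $x$, and since $v,y$ are joined in $G-N[u]$ this means $N[u]$ separates $x$ from $y$ $(\star)$. Considering $\{u,v,y\}$ in the same way yields that $N[v]$ separates $u$ from $y$ $(\star\star)$.

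Now observe that $u\notin\I(x,y)$, together with $u\notin N[x]\cup N[y]$ and $x\neq y$ (immediate: if $x=y$ then $v\in\I(u,x)$, contradicting the betweenness property applied to $u\in\I(v,x)$), forces one of three situations: (a) $x$ is adjacent to $y$; (b) $u$ and $x$ lie in different components of $G-N[y]$; (c) $u$ and $y$ lie in different components of $G-N[x]$. Case (a) is dismissed at once: then the edge $xy$ survives in $G-N[u]$, since $x,y\notin N[u]$, contradicting $(\star)$. For case (b) take a path $P$ from $u$ to $x$ avoiding $N[v]$, which exists by hypothesis; since $u,x\notin N[y]$ but are separated by $N[y]$, $P$ meets $N[y]$, say first at $t$, and $t\neq y$ — otherwise the $u$--$t$ prefix of $P$ joins $u$ to $y$ in $G-N[v]$, against $(\star\star)$ — so $t$ is adjacent to $y$, and $y$ followed by the portion of $P$ from $t$ back to $u$ is a path from $y$ to $u$ avoiding $N[v]$, again contradicting $(\star\star)$. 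Case (c) is symmetric: a path $P$ from $v$ to $y$ avoiding $N[u]$ (which exists by hypothesis) must meet $N[x]$, say first at $s$, and $s\neq x$, because $s=x$ would place $x$ on a $v$--$x$ path avoiding $N[u]$, making $\{u,v,x\}$ an asteroidal triple; hence $s$ is adjacent to $x$, and $x$ followed by the portion of $P$ from $s$ to $y$ is a path from $x$ to $y$ avoiding $N[u]$, contradicting $(\star)$.

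The step I expect to be the main obstacle is getting the reduction in the third paragraph exactly right: recognising that the (superficially vacuous) possibility that $x$ is adjacent to $y$ has to be handled separately, and, in cases (b) and (c), certifying that the first vertex of $P$ lying in $N[y]$ (respectively $N[x]$) is not $y$ itself (respectively not $x$) — this is precisely where $(\star\star)$, respectively an asteroidal triple on $\{u,v,x\}$, must be invoked. Once that point is secured, the short splicing argument closes each case against $(\star)$ or $(\star\star)$; everything else is routine bookkeeping with closed neighbourhoods and components.
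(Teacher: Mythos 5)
Your proof is correct and uses essentially the same ingredients as the paper's: $\AT$-freeness applied to the pairwise non-adjacent triples $\{u,v,x\}$ and $\{u,v,y\}$ to obtain the two separations $(\star)$ and $(\star\star)$, followed by splicing the hypothesis paths. The paper presents this as a terse direct construction with the components $C^u(\cdot)$ (after dismissing $x=y$ via an asteroidal triple), whereas you run the same argument by contradiction with a three-case split, which has the minor merit of making explicit the possibility $x\in N(y)$ that the paper leaves implicit.
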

\begin{proof}
If $x=y$ then the left-hand is only satisfied when 
$\{u,v,x\}$ is an asteroidal triple. 
\:Otherwise 
\[C^u(v)=C^u(y) \quad\text{and}\quad 
C^u(v) \neq C^u(x).\]
The vertices $u$ and $y$ are connected by a path that 
avoids $N[x]$ \:since $u \in \I(v,x)$. 
\,\textemdash\, Also \,\textemdash\, the vertices 
$u$ and $x$ are connected by a path that avoids $N[y]$ 
\:since $v \in \I(u,y)$. 

\medskip 

\noindent 
This proves the lemma. 
\bqed\end{proof}

\bigskip 

\begin{lemma}
Let \,$G$\, be an \,$\AT$-free graph. 
Any five  vertices satisfy the following 
property. 
\begin{multline*}
a \in \I(x,b) \quad \text{\rm{and}} \quad b \in \I(y,z) 
\quad \text{\rm{and}} \quad a \notin N[y] \cup N[z] \quad \Rightarrow \\
a \in \I(x,y) \quad\text{\rm{or}}\quad a \in \I(x,z) \quad\text{\rm{or}}\quad 
a \in \I(y,z)
\end{multline*}
\end{lemma}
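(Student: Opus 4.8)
The plan is to argue by contradiction: assume $a \in \I(x,b)$, $b \in \I(y,z)$, $a \notin N[y] \cup N[z]$, yet $a \notin \I(x,y)$, $a \notin \I(x,z)$, and $a \notin \I(y,z)$. The last hypothesis, $a \notin N[y] \cup N[z]$, guarantees that $a$ lies in some component of $G - N[y]$ and in some component of $G - N[z]$, so the notation $C^y(a)$ and $C^z(a)$ from the appendix makes sense. Since $a \in \I(x,b)$ there is an $a$--$x$ path avoiding $N[b]$ and an $a$--$b$ path avoiding $N[x]$; I will want to track which components of $G - N[y]$ and $G - N[z]$ the vertices $x$ and $b$ land in.

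**Extracting the component structure.** From $a \notin \I(y,z)$ we know $a$ cannot reach both $y$ avoiding $N[z]$ and $z$ avoiding $N[y]$; combined with $b \in \I(y,z)$ (so $b$ does reach $y$ avoiding $N[z]$ and $z$ avoiding $N[y]$), we get that $a$ and $b$ lie in \emph{different} components of at least one of $G - N[y]$, $G - N[z]$ — say $C^y(a) \neq C^y(b)$ after renaming. Likewise, from $a \notin \I(x,y)$ and $a \notin \I(x,z)$ together with the $a$--$x$ connectivity coming from $a \in \I(x,b)$, I expect to deduce that $a$ fails to reach $y$ (resp. $z$) avoiding $N[x]$, i.e. $C^x(a) \neq C^x(y)$ or the $x$-side path to $a$ is blocked. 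The key combinatorial step is to pin down, out of the three pairs among $\{x,y,z\}$ and the two "sides", a consistent set of component (in)equalities that together will let me exhibit an asteroidal triple or else violate the betweenness inequality $z \in \I(x,y) \Rightarrow x \notin \I(z,y)$ that characterizes $\AT$-freeness.

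**Where Lemma~\ref{prop 3.3} enters.** I expect the previous lemma (property~(1)) to be the workhorse. It says $u \in \I(v,x)$ and $v \in \I(u,y)$ imply $u \in \I(x,y)$ — a "transitivity through a middle vertex." Here $b$ plays the role of the middle vertex: we have $a \in \I(x,b)$, and we would like an appropriate statement $b \in \I(a, \cdot)$ to chain with it. The hypothesis $b \in \I(y,z)$ is not yet of that form, so the maneuver will be to use the $\AT$-freeness betweenness axiom to rewrite $b \in \I(y,z)$ (perhaps as $b \in \I(y, a)$-ish statements after establishing $a \notin \I(b,y)$ or similar from the negated conclusions) and then invoke Lemma~\ref{prop 3.3} with $(u,v,x,y) = (a,b,x,y)$ or $(a,b,x,z)$ to force $a \in \I(x,y)$ or $a \in \I(x,z)$, contradicting the assumption.

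**Main obstacle.** The hard part will be the bookkeeping of components and the careful case split on which of the two "sides" ($N[y]$ versus $N[z]$) separates $a$ from $b$: there are genuinely several cases, and in each I must verify that the paths I need (e.g. an $a$--$y$ path avoiding $N[x]$) actually exist, built by splicing the path from $a \in \I(x,b)$ with a path from $b \in \I(y,z)$ through the vertex $b$ while staying out of the forbidden closed neighborhood. Making the splice legitimate — i.e. checking that the concatenated walk avoids $N[x]$ (or $N[z]$) throughout, not just piecewise — is exactly where $\AT$-freeness, via the betweenness axiom and Lemma~\ref{prop 3.3}, must be used to rule out the "bad" splice points. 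I would organize the proof as: (i) reduce to the component inequalities; (ii) handle the degenerate coincidences among $x,y,z$ separately (as in the proof of Lemma~\ref{prop 3.3}, where $x=y$ forces an asteroidal triple); (iii) in the main case, apply Lemma~\ref{prop 3.3} with $b$ as the middle vertex to close the contradiction.
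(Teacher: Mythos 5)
Your submission is a plan, not a proof: every load-bearing step is deferred (``I expect,'' ``the maneuver will be,'' ``the hard part will be''). You never actually establish the component facts you announce in the second paragraph, never perform the case split, and never construct a single one of the paths required for any of the three disjuncts. The step you yourself single out as the crux \textemdash\ verifying that a walk spliced at $b$ out of a path coming from $a \in \I(x,b)$ and a path coming from $b \in \I(y,z)$ avoids the relevant closed neighborhood throughout \textemdash\ is precisely what a proof of this lemma consists of, and it is left unresolved.

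Moreover, the specific mechanism you propose is doubtful. To chain Lemma~\ref{prop 3.3} with $b$ as the middle vertex you would need a statement of the form $b \in \I(a,y)$ or $b \in \I(a,z)$, hence in particular a $b$--$y$ (or $b$--$z$) path avoiding $N[a]$; nothing in the hypotheses supplies this, and it is exactly in the situations where the correct disjunct is $a \in \I(y,z)$ \textemdash\ rather than $a \in \I(x,y)$ or $a \in \I(x,z)$ \textemdash\ that such a statement can fail, so your outline has no route to the third alternative of the conclusion. The paper's proof proceeds quite differently and directly: it splits on whether $C^b(a)$ equals $C^b(z)$, equals $C^b(y)$, or differs from both (components of $G-N[b]$, not of $G-N[y]$, $G-N[z]$ or $G-N[x]$ as in your sketch), and in each case exhibits the two paths witnessing the claimed betweenness by concatenating the $a$--$b$ path that avoids $N[x]$ with the $b$--$y$ or $b$--$z$ path guaranteed by $b \in \I(y,z)$, using $a \notin N[y] \cup N[z]$ together with separator containments such as $N(C^b(y)) \cup N(C^b(z)) \subseteq N[x]$; Lemma~\ref{prop 3.3} plays no role there. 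Until you carry out a concrete case analysis of this kind, the proposal contains no verifiable argument.
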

\begin{proof}
Assume that $C^b(a) \neq C^b(z)$ and $C^b(a) \neq C^b(y)$. 
\:Observe that the vertices $a$ and $x$ are connected by a path that avoids $N[y] \cup N[z]$. 
If there are no paths from $a$ to $z$ \:nor from $a$ to 
$y$ \:that avoid $N[x]$ \:then 
\[N(C^b(y)) \,\cup\, N(C^b(z)) \,\subseteq\, N[x].\] 
However, there is a path from $a$ to $b$ that avoids $N[x]$ 
\:which implies \:that there is a path from 
$a$ to $b$ that avoids $N[z]$. 
By concatenation of this path \:with a path from 
$b$ to $y$ that avoids $N[z]$ 
\:we find a path from 
$a$ to $y$ that avoids $N[z]$. 
\;Similarly, \;there is a path from $a$ to $z$ that avoids $N[y]$. 
This proves $a \in \I(y,z)$. 

\medskip 

\noindent 
Assume \:$C^b(a)= C^b(z)$. \:There is a path from $a$ 
to $x$ that avoids $N[y]$. If there is no path from 
$a$ to $y$ that avoids $N[x]$ \:we have 
\[N(C^b(y)) \subseteq N[x]\] 
since there is a path connecting $a$ and $b$ that 
avoids $N[x]$. Since $a \in \I(x,b)$ \:$N[a]$ separates 
$x$ and $b$ \:which implies 
\[N(x) \,\cap\, N[b] \,\subseteq\, N(a).\] 
There is a path from $b$ to $y$ that avoids $N[z]$ 
\:since $b \in \I(y,z)$. \:Since \:$a \notin N[z]$ 
\:there is a path from $a$ to $y$ that avoids $N[z]$. 
\:Since \:$C^b(a) = C^b(z)$ \:there is a path from 
$a$ to $z$ that avoids $N[y]$ \:which implies 
\:$a \in \I(y,z)$. 

\medskip 

\noindent 
Assume \:$C^b(a)=C^b(y)$. \:In an analogous manner it 
follows that $a \in \I(x,z)$ or $a \in \I(y,z)$. 

\medskip 

\noindent 
This proves the lemma. 
\bqed\end{proof}

Note that
$a \in \I(x,b) \quad \text{\rm{and}} \quad b \in \I(y,z) 
\quad \Rightarrow \quad a \notin N[y] \cap N[z].$ 
\bigskip 

\begin{lemma}
Let \,$G$\, be an \,$\AT$-free graph. Let \,$X,Y \subseteq V$ \,and let  
\:$Y=\{y_1,\dots,y_k \}$. 
Assume that \,$Y \cap \sigma(X)= \es$\, and that no subset of \,$Y$\, 
induces a cycle on \,$X$. \:If \,\textemdash\, for 
all \,$i \in [k-1]$\,\textemdash\, \:there exist 
\:$x_i \in X$ that satisfy \:$y_{i+1} \in \I(y_i,x_i)$ \:then  
\[Y \subseteq N[y_1] \cup C
\quad \text{\rm{for a component $C$ of $G-N[y_1]$.}}\] 
\end{lemma}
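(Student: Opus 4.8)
The plan is to prove the statement by induction on $k$, tracking which component of $G - N[y_1]$ contains each $y_i$ that lies outside $N[y_1]$. For $k \le 2$ the claim is immediate: $y_2 \in \I(y_1,x_1)$ forces either $y_2 \in N[y_1]$ (impossible since then $y_2$ could not be between anything and $y_1$ — actually we simply allow it) or $y_2$ and $x_1$ lie in the same component of $G - N[y_1]$, call it $C := C^{y_1}(y_2)$. So I first fix $C$ to be the component of $G-N[y_1]$ containing $y_2$ (assuming $y_2 \notin N[y_1]$; the degenerate cases where some $y_i \in N[y_1]$ are handled separately), and then show by induction that every $y_i$ with $i \ge 2$ satisfies $y_i \in N[y_1] \cup C$.

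For the inductive step, suppose $y_2,\dots,y_i \in N[y_1] \cup C$ and consider $y_{i+1}$, where $y_{i+1} \in \I(y_i, x_i)$ for some $x_i \in X$. The natural move is to apply Lemma~\ref{lm 3.4} (the five-vertex property) with a suitable substitution. The key observation I want to exploit is that $y_1 \notin \sigma(X)$ combined with ``no subset of $Y$ induces a cycle on $X$'' rules out certain betweenness configurations: concretely, if $y_{i+1}$ were in a \emph{different} component of $G - N[y_1]$ than $C$ (and not in $N[y_1]$), I should derive either that $y_1 \in \sigma(X)$ or that some subset of $Y$ closes up into a cycle on $X$, contradicting the hypotheses. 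To make this precise I would use Lemma~\ref{prop 3.3} (the four-vertex property) to propagate betweenness relations along the chain $y_2, y_3, \dots$: from $y_{i+1} \in \I(y_i,x_i)$ and an inductively maintained relation of the form $y_i \in \I(y_1, x)$ for some $x$, Lemma~\ref{prop 3.3} should yield $y_{i+1} \in \I(y_1, x_i)$ or similar, which then pins $y_{i+1}$ into $C$ via $C^{y_1}(y_{i+1}) = C^{y_1}(x_i)$ — but $x_i \in X$ and $x_i$ together with $y_1$ being related would push toward $y_1 \in \sigma(X)$ unless $x_i$ already lies in $C$. Here is where I expect to need the five-vertex Lemma~\ref{lm 3.4}: when the four-vertex propagation stalls (because the relevant vertex sits in $N[y_i]$ or $N[x_i]$), Lemma~\ref{lm 3.4} provides the branching ``$a \in \I(x,y)$ or $a \in \I(x,z)$ or $a \in \I(y,z)$'' that lets one of the three disjuncts carry the induction forward or produce the contradicting cycle.

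The main obstacle, I anticipate, is the bookkeeping needed to maintain a clean inductive invariant — something like ``for each $i \ge 2$ there is $x \in X$ with $y_i \in \I(y_1, x)$, and moreover $C^{y_1}(y_i) = C^{y_1}(x) = C$'' — through the inductive step, because the five-vertex lemma introduces a disjunction and one must argue that the ``bad'' disjuncts each contradict either $Y \cap \sigma(X) = \es$ or the no-cycle hypothesis. Showing that the bad disjunct forces a cycle on $X$ requires constructing the cyclic sequence of $\sigma(\cdot + X)$-memberships explicitly from the betweenness relations, using the fact that $z \in \I(p,q)$ together with $p, q \in \sigma(X)$ would give $z \in \sigma(X)$ (by convexity of $\sigma(X)$), so a failure of containment must ``reflect back'' as $p$ or $q$ being forced into $\sigma(z + X)$. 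Once the invariant survives one step, the conclusion $Y \subseteq N[y_1] \cup C$ is just the accumulated statement over all $i$, and the special cases where some early $y_i$ lands in $N[y_1]$ are dispatched by noting that then all subsequent $y_j$ are reachable from $y_1$ through $N[y_1]$-neighbors, keeping them in $N[y_1] \cup C$ as well.
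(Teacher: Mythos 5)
There is a genuine gap: what you have written is a plan whose decisive steps are left as intentions (``should yield'', ``I expect to need'', ``the main obstacle, I anticipate''), and the central device you propose does not work. Your inductive invariant is ``for each $i\ge 2$ there is $x\in X$ with $y_i\in \I(y_1,x)$ and $C^{y_1}(y_i)=C^{y_1}(x)=C$'', to be propagated with Lemma~\ref{prop 3.3}. But Lemma~\ref{prop 3.3} needs two \emph{mutual} betweenness relations, $u\in \I(v,x)$ \emph{and} $v\in \I(u,y)$, whereas the hypothesis only supplies the one\textendash directional chain $y_{i+1}\in \I(y_i,x_i)$; there is no relation of the form $y_i\in \I(y_{i+1},\cdot)$ to feed into it, so the step ``Lemma~\ref{prop 3.3} should yield $y_{i+1}\in\I(y_1,x_i)$'' has no justification. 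Worse, the invariant is simply false in general: the conclusion of the lemma allows $y_i\in N[y_1]$, and then $y_i\in \I(y_1,x)$ is impossible for every $x$ (betweenness requires a path from $y_i$ to $x$ avoiding $N[y_1]$). The paper's own argument must handle exactly this situation (the case $y_3\in N(y_1)$), and it is the harder half of the proof. Your closing remark that when some $y_i\in N[y_1]$ ``all subsequent $y_j$ are reachable from $y_1$ through $N[y_1]$-neighbors, keeping them in $N[y_1]\cup C$'' is a non sequitur: a vertex of $N[y_1]$ has neighbors in arbitrary components of $G-N[y_1]$, so $y_{i+1}\in\I(y_i,x_i)$ with $y_i\in N[y_1]$ could a priori land in a different component; ruling that out is precisely where the hypotheses $Y\cap\sigma(X)=\es$ and ``no subset of $Y$ induces a cycle on $X$'' must be used, and your sketch never exhibits how a violation produces either a cycle on $X$ or a member of $\sigma(X)$.

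For comparison, the paper does not induct on $k$ with a per-vertex invariant. It takes a counterexample with $Y$ meeting at least two components of $G-N[y_1]$ and $|Y|$ minimal; minimality excludes all shortcut relations $y_j\in\I(y_i,x)$ with $j>i+1$, $x\in\sigma(X)$; Lemma~\ref{lm 3.4} then pins $y_3$ into $\left(N(x_1)\cup N(y_1)\right)\setminus\left(N(x_1)\cap N(y_1)\right)$, and the two resulting cases are settled by component analyses in $G-N[y_2]$ (when $y_3\in N(x_1)$) and in $G-N[y_3]$ (when $y_3\in N(y_1)$), the latter using the asteroidal-triple-freeness directly to separate $C^{y_3}(x_1)$ from $C^{y_3}(x_2)$ and $C^{y_3}(y_2)$. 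To salvage your approach you would need to replace the unprovable $\I(y_1,\cdot)$ invariant by this kind of neighborhood/component bookkeeping, at which point you are essentially reconstructing the paper's proof.
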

\begin{proof}
Assume that the lemma does not hold. Choose $X$ and $Y$ 
so that $Y$ intersect \,at \,least \:two \:components of 
$G-N[y_1]$ \:and \:$|Y|$ \:is \:minimal. 
Then $|Y| \geq 3$. 

\medskip 

\noindent 
Since $Y$ is minimal 
\[\forall_{\,j\,>\,i+1} \quad 
\forall_{\,x \,\in \,\sigma(X)} \quad y_j \,\notin\, \I(y_i,x). \] 

\medskip 

\noindent 
By \:Lemma~\ref{lm 3.4} 
\[y_3 \in \left(\, N(\,x_1\,) \,\cup\, N(\,y_1\,) \,\right) \,\setminus\, 
\left( \,N(\,x_1\,) \,\cap\, N(\,y_1\,) \,\right).\] 

\medskip 

\noindent 
Assume $y_3 \in N(x_1)$. Consider $G-N[y_2]$. 
\:For \:$i \geq 2$ 
\[y_i \,\in\, N[\,y_2\,] \,\cup\, C^{y_2}(x_1).\] 
Since \:$C^{y_2}(x_1) \neq C^{y_2}(y_1)$ 
\:there exists of a path from $x_1$ to $y_2$ in 
$G-N[y_1]$. \:This \:contradicts \:the \:assumption. 

\medskip 

\noindent 
Assume $y_3 \in N(y_1)$. Consider $G-N[y_3]$. \:We \:have 
\:$x_2 \in N(y_1)$ \:otherwise \:$y_1 \in \I(y_2,x_2)$. 
\,\textemdash\, Moreover \,\textemdash \,  
\[C^{y_3}(x_1) \neq C^{y_3}(x_2) 
\quad\text{and}\quad C^{y_3}(x_1) \neq C^{y_3}(y_2)\] 
since the first equality would imply that $\{x_1,y_1,y_2\}$ is an 
asteroidal triple \:and 
\:the second equality would imply $y_2 \in \I(x_1,x_2)$. 

\medskip 

\noindent 
When \:$|Y| \geq 4$, 
\[y_4 \,\in\, \left( N(\,x_2\,) \,\cup\, N(\,y_2\,) \right) \setminus N[\,y_3\,] \]
and thus
\[
\forall_{\,i \,\geq\, 4} \quad y_i \,\in\, C^{y_3}(\,x_2\,) 
\,\cup\, C^{y_3}(\,y_2\,) \,\cup\, N(\,y_3\,).\]
Notice that $y_2 \in \I(x_1,y_1)$ implies that $x_1$ 
and $y_2$ are connected by a path that avoids $N[y_1]$. 
\:This \:implies 
\[N(C^{y_3}(x_1)) \setminus N[y_1] \neq \es.\] 
Let \:$u \in N(C^{y_3}(x_1)) \setminus N[y_1]$. 
\:Then 
\[\forall_{\,i \,\geq \,2} \quad 
y_i \in \left(C^{y_3}(\,x_2\,) \cup C^{y_3}(\,y_2\,) \right) 
\setminus N[\,y_1\,] \quad \Rightarrow\quad 
y_i \in N(\,u\,)\] 
since \:otherwise \:$y_1 \in \I(x_1,y_i)$. 
\,\textemdash\, Furthermore \,\textemdash\, 
for $y_i \in N[y_3] \setminus N[y_1]$ 
which implies that $N[y_i]$ intersects any $x_1,y_2$-path that avoids 
$N[y_1]$. 
This proves the lemma. 
\bqed\end{proof}

\bigskip 

\begin{theorem}
Let \,$G$\, be an \,$\AT$-free graph. \,The convex sets in \,$G$\, constitute a 
convex geometry \,on \,$V(G)$. 
\end{theorem}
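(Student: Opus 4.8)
The plan is to verify the criterion of Lemma~\ref{property (2.3)}. I would first note that the convex sets of $G$ form an alignment: $\es$ and $V(G)$ are convex, and if $X_1,X_2$ are convex and $u,v\in X_1\cap X_2$ then every vertex of $\I(u,v)$ lies in both $X_1$ and $X_2$, so the family is closed under intersection. By Lemma~\ref{property (2.3)} it remains to show that whenever a set $Y=\{y_1,\dots,y_k\}$ induces a cycle on a set $X$, one has $Y\subseteq\sigma(X)$. Because $\sigma(y_i+X)=\sigma(y_i+\sigma(X))$ I may assume $X=\sigma(X)$ is convex. Arguing by contradiction, I would pick a counterexample that is smallest in a suitable sense (first making each step of the cycle a genuine betweenness relation, then minimizing $|Y|$; see below). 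Such a choice quickly yields $Y\cap X=\es$ --- if some $y_i\in X$ then $y_{i+1}\in\sigma(y_i+X)=X$ and going once around the cycle forces $Y\subseteq X$ --- and that no nonempty proper subset of $Y$ induces a cycle on $X$, since by minimality such a subset would lie in $\sigma(X)=X$ while avoiding $X$.

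The delicate preliminary step is to pass from the cycle condition, which supplies only $y_{i+1}\in\sigma(y_i+X)$, to the hypothesis of Lemma~\ref{lemma 3.5}, which asks for vertices $x_i\in X$ with $y_{i+1}\in\I(y_i,x_i)$. For this I would describe the one-point extension of a convex set: for convex $X$ and $w\notin X$, every vertex of $\sigma(w+X)\setminus X$ is reachable from $w$ by a chain $w=w_0,w_1,\dots,w_t$ with $w_{j+1}\in\I(w_j,x_j)$ for suitable $x_j\in X$ --- the work here is to show that the family of such ``chains off $w$ through $X$'' is already closed under the betweenness operation, which is exactly what Lemmas~\ref{prop 3.3} and~\ref{lm 3.4} are for, taking care of the cases in which the interpolated vertex meets $N[w]$ or $N[x_j]$. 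Replacing each cycle step $y_i\to y_{i+1}$ by such a chain (and re-extracting a cyclic portion) lets me assume from the outset that $y_{i+1}\in\I(y_i,x_i)$ for every $i$, read cyclically so that $y_1\in\I(y_k,x_k)$ as well.

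With the cycle now expressed through betweenness relations, Lemma~\ref{lemma 3.5} applies: since $y_2\in\I(y_1,x_1)$ forces $y_2\notin N[y_1]$, it gives $Y\subseteq N[y_1]\cup C$ where $C$ is the component of $G-N[y_1]$ containing $y_2$; the $y_2$-to-$x_1$ path that avoids $N[y_1]$ shows $x_1\in C$, while $y_1\in\I(y_k,x_k)$ forces $y_k\notin N[y_1]$, hence $y_k\in C$, and separates $x_k$ from $y_k$ by $N[y_1]$, so $x_k\notin C$. Next I would apply Lemma~\ref{lm 3.4} to consecutive triples, with $a=y_{i+2}$, $b=y_{i+1}$, and $y_i,x_i$ witnessing $y_{i+1}\in\I(y_i,x_i)$: if the side condition $y_{i+2}\notin N[y_i]\cup N[x_i]$ holds, the conclusion places $y_{i+2}$ in $\I(x_{i+1},x_i)\subseteq X$, contradicting $Y\cap X=\es$, or in $\sigma(y_i+X)$, in which case deleting $y_{i+1}$ yields a strictly shorter cycle-inducing set, contradicting minimality. (When $k=2$ one applies Lemma~\ref{prop 3.3} directly, placing $y_1$ in $\I(x_1,x_2)\subseteq X$.) So the side condition fails everywhere, forcing adjacencies such as $y_{i+2}\in N[x_i]$; using the confinement $Y\subseteq N[y_1]\cup C$ and the memberships $x_1\in C$, $x_k\notin C$ to decide which component of which $G-N[y_j]$ each relevant vertex lies in, the paths witnessing the relations $y_{i+1}\in\I(y_i,x_i)$ can be concatenated through these adjacencies into three vertices pairwise joined by paths that each avoid the closed neighbourhood of the third --- an asteroidal triple, contradicting that $G$ is $\AT$-free. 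Every case ends in contradiction, so no cycle-inducing set escapes $\sigma(X)$, and Lemma~\ref{property (2.3)} gives the theorem.

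I expect the main obstacle to be the second paragraph: pinning down the structure of $\sigma(w+X)$ for convex $X$ and handling the neighbourhood-meeting cases in the closure argument, and then checking that after replacing cycle steps by betweenness chains one can still extract a counterexample satisfying all the needed minimality properties. Once every step of the cycle is a genuine betweenness relation with a vertex of $X$, Lemmas~\ref{lm 3.4} and~\ref{lemma 3.5} are tightly matched to the task --- the former pushes a vertex into $\sigma(X)$ or shortens the cycle, the latter confines $Y$ enough that the degenerate case is forced to build an asteroidal triple --- so the remaining case analysis, while somewhat intricate, should be routine.
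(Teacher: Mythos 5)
Your outline follows the same architecture as the paper's proof (minimal cycle-inducing counterexample with $Y\cap\sigma(X)=\es$ and no chords, Lemma~\ref{lm 3.4} to force each $y_{i+2}$ into exactly one of $N(y_i)$, $N(x_i)$, Lemma~\ref{lemma 3.5} for the confinement, and an $\AT$-type contradiction), but the finish has a genuine gap. Your plan to ``concatenate the witness paths through the forced adjacencies into an asteroidal triple'' only plausibly works when some $y_{j+2}\in N(x_j)$: that is the paper's Case~2, where Lemma~\ref{lemma 3.5} applied at $y_2$ puts $y_1$ and $y_3$ in one component of $G-N[y_2]$, and $x_1\in N[y_3]$ then yields $y_2\in\I(x_1,y_1)$ together with $y_1\in\I(x_1,y_2)$, which is the contradiction. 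The complementary case, in which $y_{i+2}\in N(y_i)$ for \emph{every} $i$, does not produce an asteroidal triple by concatenating paths at all; the paper disposes of it by a different kind of argument: chordlessness gives $y_i\in N(x_{i+1})$ and the containment $N(x_{i-1})\cap N[y_i]\subseteq N(x_i)\cap N[y_i]$ (else $y_i\in\I(x_{i-1},x_i)\subseteq\sigma(X)$), and propagating these around the cycle forces $y_i\in N(x_j)$ for all $i,j$, which is incompatible with $y_{i+1}\in\I(y_i,x_i)$. Calling the remaining case analysis ``routine'' hides exactly this case, which your $\AT$-building scheme does not cover.

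The second paragraph of your proposal is also a plan rather than a proof. The claim that every vertex of $\sigma(w+X)\setminus X$ (for convex $X$) is reachable from $w$ by a chain of single betweenness steps with witnesses in $X$ amounts to showing that the chain-reachable set, together with $w$ and $X$, is itself convex, i.e.\ closed under $\I(u,v)$ for two arbitrary chain vertices $u,v$; Lemmas~\ref{prop 3.3} and~\ref{lm 3.4} treat specific four- and five-vertex configurations and it is not at all clear they suffice, and you give no argument, only the (correct) prediction that this is the main obstacle. The paper sidesteps this by reading each cycle step directly as $y_{i+1}\in\I(y_i,x_i)$ with $x_i\in X$ and never performing your reduction. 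So as it stands the proposal is an outline of the paper's strategy in which the two hardest pieces --- the hull-to-betweenness reduction you add, and the all-$N(y_i)$ case you omit --- are left unproven.
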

\begin{proof}
Assume there exist sets $X$ and $Y$ that contradict this 
theorem \,\textemdash\, that is \,\textemdash\, $Y$ induces a cycle on $X$. \:Let \:$Y=\{y_1,\dots,y_k\}$. \:We may assume that 
$Y$ is minimum \:and \:that \:$Y \cap \sigma(X) = \es$. 
\par\noindent 
\,\textemdash\, Also \,\textemdash\, 
\[\exists_{\,x \,\in \,X} \quad y_j \in \I(y_i,x) 
\quad\Rightarrow \quad j=i+1,\] 
where all arithmetics here are taken with modulo $k$. By \:Lemma~\ref{lm 3.4} \: 
\[\forall_i \quad y_{i+2} \,\in\, 
\left(\,N(\,x_i\,) \,\cup\, N(\,y_i\,) \,\right)
\,\setminus\, \left(\,N(\,x_i\,) \,\cap\, N(\,y_i\,)\,\right).\]

\medskip 

\noindent
Consider \:the \:following \:two \:cases. 
\begin{description}
\item[Case $1$:] \;Assume $\forall_{\,i} \:y_{i+2} \in N(y_i)$. 
\:Then 
\begin{equation}
\label{eq:p1}
N(\,x_{i-1}\,) \,\cap\, N[\,y_i\,] \,\subseteq\, N(\,x_i\,) \,\cap\, N[\,y_i\,]
\end{equation}
since \:otherwise \:$y_i \in \I(x_{i-1},x_i)$. By $y_i\notin I(x_{i+1}, y_{i+1})$, 
\begin{equation}
\label{eq:p2}
y_{i+2} \in N(y_i) \quad\Rightarrow \quad 
y_i \in N(x_{i+1})
\end{equation}
Notice that Eq.~\eqref{eq:p2} holds for all $i\in [k]$. Then, by repeatedly apply Eq.~\eqref{eq:p1} we have $y_i\in N(x_j)$ for all $i$ and $j$. This contradicts the assumption that $Y$ induces a cycle on $X$. 
\item[Case $2$:] \;Assume \:$\exists_{\,j} \:y_{j+2} \in N(x_j)$. 
Without loss of generality \:assume \:$j=1$. 
We show that \:$y_2 \in \I(x_1,y_1)$ \:and \:$y_1 \in 
\I(x_1,y_2)$.  It suffices to show that $x_1$ and $y_1$ 
are connected by a path that avoids $N[y_2]$. 

\medskip 

\noindent 
Consider 
\[y_3 \in \I(x_2,y_2) \quad 
y_4 \in \I(x_3,y_3) \quad \dots \quad y_1 \in \I(x_k,y_k).\]
Since \:$\{y_1,y_3\} \cap N[y_2] =\es$ \,\textemdash\, 
by Lemma~\ref{lemma 3.5} \,\textemdash\, 
$y_1$ and $y_3$ are in the same component of $G-N[y_2]$. 
\:Since \:$x_1 \in N[y_3]$ \:this proves that $x_1$ and 
$y_1$ are connected by a path that avoids $N[y_2]$ 
\,\textemdash\, as claimed. 
\end{description}
This proves the theorem. 
\bqed\end{proof}


\begin{thebibliography}{99}

\bibitem{kn:alcon}Alc\'on,~L., B.~Bre\v{s}ar, T.~Gologranc, 
M.~Gut\'{\i}errez, T.~\v{S}umenjak, I.~Peterin and A.~Tepeh, 
Toll convexity, 
{\em European Journal of Combinatorics\/} {\bf 46} (2015), pp.~161--175. 

\bibitem{kn:broersma}Broersma,~H., T.~Kloks, D.~Kratsch and  H.~M\"uller, 
Independent sets in asteroidal triple-free graphs, 
{\em SIAM Journal on Discrete Mathematics\/} {\bf 12} (1999), pp.~276--287. 


\bibitem{kn:chvatal}Chv\'atal,~V., 
Antimatroids, betweenness, convexity. In (Cook, Lov\'asz, Vygen eds.) 

{\em Research Trends in Combinatorial Optimization\/} (2009), 
Springer, Berlin,  
Heidelberg, pp.~57--64. 

\bibitem{kn:corneil}Corneil,~D. and J.~Stacho, 
Vertex ordering characterizations of graphs of bounded 
asteroidal number, 
{\em Journal of Graph Theory\/} {\bf 78} (2015), pp.~61--79. 

\bibitem{kn:crapo}Crapo,~H., 
\,\textemdash\, Selectors \,\textemdash\, A theory of formal languages, 
semimodular lattices,  
branching and shelling processes, 
{\em Advances in Mathematics\/} {\bf 54} 
(1984), pp.~233--277. 

\bibitem{kn:edelman}Edelman,~P. and R.~Jamison, 
The theory of convex geometries, 
{\em Geometriae Dedicata\/} {\bf 19} (1985), pp.~247--270. 


\bibitem{kn:helman}Helman,~P., B.~Moret and H.~Shapiro, 
An exact characterization of greedy structures, 
{\em SIAM Journal on Discrete Mathematics\/} 
{\bf 6} (1993), pp.~274--283. 

\bibitem{kn:kashiwabara}Kashiwabara,~K. and Y.~Okamoto, 
A greedy algorithm for convex geometries, 
{\em Discrete Applied Mathematics\/} {\bf 131} (2003), pp.~449--465. 

\bibitem{kn:kordecki}Kordecki,~W., 
Secretary problem: graphs, matroids and greedoids. 
Manuscript on ArXiv: 1801.00814, 2018. 

\bibitem{kn:korte}Korte,~B., L.~Lov\'asz and R.~Schrader, 
{\em Greedoids\/}, 
Springer-Verlag Berlin Heidelberg New York, 1991. 

\bibitem{kn:pruesse2}Pruesse,~G. and F.~Ruskey, 
Gray codes from antimatroids, 
{\em Order\/} {\bf 10} (1993), pp.~239--252. 

\bibitem{kn:pruesse1}Pruesse,~G. and F.~Ruskey, 
Generating linear extensions fast, 
{\em SIAM Journal on Computing\/} {\bf 23} (1994), pp.~373--386. 

\bibitem{kn:sawada}Sawada,~J., 
Oracles for vertex elimination orderings, 
{\em Theoretical Computer Science\/} {\bf 341} (2005), pp.~73--90. 

\end{thebibliography}
\end{document}